\newtheorem{theorem}{Theorem}[section]
\newtheorem{prop}[theorem]{Proposition}
\newtheorem{lemma}[theorem]{Lemma}
\newtheorem{rem}[theorem]{Remark}
\numberwithin{equation}{section}
\newcommand{\N}{\mathbb{N}}
\newcommand{\R}{\mathbb{R}}
\newcommand{\Loneloc}{L^1_{loc}}
\newcommand{\Ltwoloc}{L^2_{loc}}
\newcommand{\Honeloc}{H^1_{loc}}
\newcommand{\feps}{f^\varepsilon}
\newcommand{\geps}{g^\varepsilon}
\newcommand{\fk}{f^k}
\newcommand{\gk}{g^k}
\newcommand{\muk}{\mu^k}
\newcommand{\aeps}{\alpha^\varepsilon}
\newcommand{\beps}{\beta^\varepsilon}
\newcommand{\veps}{v^\varepsilon}
\newcommand{\anull}{\alpha^0}
\newcommand{\bnull}{\beta^0}
\newcommand{\ainfty}{\alpha^\infty}
\newcommand{\binfty}{\beta^\infty}
\begin{document}

\title{On a Boltzmann type price formation model}

\author{Martin Burger$^1$}
\address{$^1$Institute for Computational and Applied Mathematics, University of M\"unster, Einsteinstrasse 62, 48149 M\"unster, Germany}

\author{ Luis Caffarelli$^2$}
\address{$^2$University of Texas at Austin, 1 University Station, C120, Austin, Texas 78712-1082, USA }

\author{Peter Markowich$^3$}
\address{$^3$ 4700 King Abdullah University of Science and Technology, Thuwal 23955-6900, Kingdom of Saudi Arabia  }

\author{Marie-Therese Wolfram$^4$}
\address{$^4$ Department of Mathematics, University of Vienna, Nordbergstr. 15, 1090 Vienna, Austria}
\maketitle

\begin{abstract}
In this paper we present a Boltzmann type price formation model, which is motivated by a parabolic free boundary model for the evolution of the prize presented by Lasry and Lions in 2007.
We  discuss the mathematical analysis of the Boltzmann type model and show that its solutions converge to solutions of the model by Lasry and Lions as the transaction rate
tends to infinity. Furthermore we analyse the behaviour of the initial layer on the fast time scale and illustrate the price dynamics with various numerical experiments.\\
{\bf Keywords: Boltzmann type equation, price formation, free boundary, asymptotics, numerical simulations\\ } 
\end{abstract}

\section{A Boltzmann type model for price formation}

\noindent Market microstructure analysis studies the trading mechanisms of assets in (financial) markets.
According to \cite{ohara} markets have two principal functions - they provide 
liquidity and facilitate the price. The evolution of the price is influenced by the trading 
system and the nature of the players, and is hence an emergent phenomenon from microscopic interaction, that calls for advanced mathematical modelling and analysis.

\noindent While there has been a lot of research in economics and statistics related to either the micro- or macrostructure of markets, few investigations dealt with systematically analysing the micro-macro transition. Recent work by \cite{lasrylions} revived the mathematical interest by introducing a price formation model that describes the evolution of price by a system of parabolic equations for the trader densities (as functions of the bid-ask price), with the agreed price entering as a free boundary. The authors motivated the model using mean field game theory, but the detailed microscopic origin remained unclear.

\noindent In this paper we provide a simple agent based trade model with standard stochastic price fluctuations together with discrete trading events. By modelling trading events between vendors and buyers as kinetic collisions we obtain a Boltzmann-type model for the densities. Then we prove rigorously that in the limit of large trading frequencies, the proposed Boltzmann model converges to the Lasry and Lions
free boundary problem. We also analyse other asymptotics beyond the scales that the free boundary model can describe. Hence we provide a basis for deriving macroscopic limits from the microscopic structure of trading events, which allows for various generalisations to make the model more realistic.

\noindent To set up the model we consider the price formation process of a certain good which is traded between two groups, namely a group of buyers and a group of vendors. The groups are described by the two positive densities $f = f(x,t)$ and $g = g(x,t)$,
where $x$ denotes the bid resp. ask price and $t$ the time. If a buyer and a seller agree on a price, a transaction takes place and the buyer immediately becomes a seller and vice versa. The transaction fee is denoted by a positive
 constant $a \in \R$. As a consequence the actual price for a buyer is $x+a$, therefore he/she will try to resell the good for at least that price. The profit for the vendor is $x-a$, therefore he/she is not willing to pay a higher
price in the next trading event.\\
The situation above can be described by the following Boltzmann type price formation model:
\begin{subequations}\label{e:boltzprice}
\begin{align}
f_t(x,t) &=  \frac{\sigma^2}{2} f_{xx}(x,t) - k f(x,t) g(x,t) + k f(x+a,t) g(x+a,t) \\
g_t(x,t) &=  \frac{\sigma^2}{2} g_{xx}(x,t) - k f(x,t) g(x,t) + k f(x-a,t) g(x-a,t).
\end{align}
with initial data
\begin{align}
f(x,0) = f_I(x) \geq 0, ~~g(x,0) = g_I(x) \geq 0,
\end{align}
\end{subequations}
independent of $k$. Here $f = f(x,t)$ denotes the density of buyers, $g = g(x,t)$ the density of vendors and $k$ the transaction rate. The parameter $a$ is the transaction cost for buyers and vendors, $\sigma$ the diffusivity. 
Note that the densities $f$ and $g$ depend on the parameter $k$. The volume of transactions at a price $x$ is given by: 
\begin{align}\label{e:trade}
\mu(x,t) = k f(x,t) g(x,t).
\end{align}
Note that we have conservation of the number of buyers and vendors, i.e.
\begin{align*}
\int_{\R} f(x,t) dx = \int_{\R} f_I(x) dx \text{ and } \int_{\R} g(x,t) dx = \int_{\R} g_I(x) dx,
\end{align*}
for all times $t > 0$.

\noindent The mathematical modelling of \eqref{e:boltzprice} was inspired by a price formation model presented by \cite{lasrylions}. They considered the same situation described above, but proposed a parabolic free 
boundary problem to model the evolution of the price. The model by Lasry $\&$ Lions reads:
\begin{subequations}\label{e:lasrylions}
\begin{align}
f_t(x,t) &= \frac{\sigma^2}{2} f_{xx}(x,t) + \lambda(t) \delta(x-p(t)+a) \text{ for } x < p(t) \text{ and } f(x,t) = 0 \text{ for } x > p(t)\\
g_t(x,t) &= \frac{\sigma^2}{2} g_{xx}(x,t) + \lambda(t) \delta(x-p(t)-a) \text{ for } x > p(t) \text{ and } g(x,t) = 0 \text{ for } x < p(t),
\end{align}
\end{subequations}
where the free boundary $p = p(t)$ denotes the agreed price of the trading good at time $t$ and $\lambda(t) = - f_x(p(t),t) = g_x(p(t),t)$. Note that the difference of buyer and vendor densities $v = f-g$ satisfies the following equation
\begin{align}\label{e:lasrylionsdiff}
v_t(x,t) &= v_{xx}(x,t) + \lambda(t)(\delta(x-p(t)+a)-\delta(x-p(t)-a)).
\end{align}
The function $\lambda(t)$ is the transaction rate at time $t$, which corresponds to the flux of buyers and vendors. The Dirac deltas correspond to trading events which take place at the agreed price $p = p(t)$, shifted by
the transaction cost $a$. The analysis of system \eqref{e:lasrylions} was studied in a number of papers, see \cite{MMPW, CGGK,CMP, CMW}. \\
The main difference between system \eqref{e:boltzprice} and \eqref{e:lasrylions} is that the agreed price $ p=p(t)$ enters as a free boundary in \eqref{e:lasrylions}. Furthermore the density of buyers and vendors are zero if the
price is greater or smaller than the agreed price. We shall show that solutions of the Boltzmann type equation \eqref{e:boltzprice} converge to solutions of \eqref{e:lasrylions} as the transaction rate tends to infinity, i.e. $k \rightarrow \infty$, thus giving a mathematical justification for \eqref{e:lasrylions} as a pure formation model. \\
This paper is organised as follows: we start with a detailed presentation of the mathematical modelling of \eqref{e:boltzprice} in Section \ref{s:modelboltz}. In Section \ref{s:kinfty} we show that solutions of \eqref{e:boltzprice} converge to solutions of \eqref{e:lasrylions} as $k\rightarrow \infty$. The initial layer problem for \eqref{e:boltzprice} is discussed in Section \ref{s:initlayer}, the scaling limit $k\rightarrow \infty, ~a\rightarrow 0$ and $ka = c > 0$ in Section \ref{s:kac}. 
This limit corresponds to high frequency trading, where computers trade goods on a rapid basis with little or no transaction costs involved. The behaviour of both models is illustrated by numerical simulations in Section \ref{s:numsim}.

\section{Modelling: From agent behaviour to Boltzmann-type equations} \label{s:modelboltz}

In order to describe price formation at a reasonably simple agent-based level, we consider a setup of a large number, say $N$, of buyers and a large number $M$ of vendors. With the exception of discrete events, the price changes are subject to random fluctuations, which we model as Brownian motions with diffusivity $\sigma$. The trading events can be modelled like kinetic collisions. After the usual kinetic limit $N \rightarrow \infty$, $M \rightarrow \infty$ we associate the density $f = f(x,t)$ to the group of buyers at time $t$, and the density $g = g(x,t)$ to the group of vendors at time $t$. Here $f(x,t)$ and $g(x,t)$ denote the fraction of the buyers and vendors willing to trade in an infinitesimal interval around $x$ at time $t$. If a buyer with state $x$ meets a vendor with state $y$, they will trade with a certain probability depending on their state. After collision, i.e. trade, a buyer becomes a vendor and vice versa. Due to conservation and indistinguishability we do not need to take into account the change of roles, but just model a standard collision $(x,y) \rightarrow (x',y')$ with collision kernel $K = K(x,y,x',y')$. More precisely, $K(x,y,x',y')$ counts the number of trading events per unit time of buyers willing to buy at price $x$ and reselling after the trading event at price $x'$, and the number of vendors willing to sell at price $y$ and re-buying after the trading event at price $y'$.  Using the notations for densities as above we arrive (with appropriate time scaling) at
\begin{subequations}\label{e:coll}
\begin{align}
 f_t(x,t) - \frac{\sigma^2}{2} f_{xx}(x,t) &= \int_\R \int_\R \int_\R K(x',y',x,y) f(x',t)g(y',t)~dy~dx'~dy'  \nonumber \\ 
&- \int_\R \int_\R \int_\R K(x,y,x',y') f(x,t)g(y,t) ~dy~dx'~dy' \\
 g_t(x,t) - \frac{\sigma^2}{2} g_{xx}(x,t) &= \int_\R \int_\R \int_\R K(x',y',y,x) f(y',t)g(x',t)~dx~dx'~dy'  \nonumber \\ 
& -\int_\R \int_\R \int_\R K(y,x,x',y') f(y,t)g(x,t) ~dx~dx'~dy'.
\end{align}
\end{subequations}
The peculiar aspect in the modelling of the collisions related to the Lasry-Lions approach is introduced by the transaction costs. Right after selling at a certain price, it only makes sense to re-buy at a price that is at least lower than the previous execution price minus the transaction costs, and similar reasoning applies to the buyers. Thus, if we assume symmetric transaction costs $a$ and denote by $r(x,y)$ the price at which a buyer with state $x$ and a vendor with state $y$ trade, then 
\begin{equation}
	x' = r(x,y) - a, \quad y' = r(x,y)+a, \label{collisionmodel}
\end{equation}
and hence $K$ is of the form
\begin{equation}
	K(x,y,x',y') = K_0(x,y) \delta(x'-r(x,y)+a) \delta(y'-r(x,y)-a), 
\end{equation}
where $\delta$ denotes the Dirac $\delta$-distribution. The detailed properties of $K_0$ and $r$ now depend on the modelling of the limit order book (cf. \cite{CL, CST,PGPS,BM, AJ}). Here we make a simple assumption that $x$ and $y$ can be interpreted as the bids of buyers and vendors, which are tried to be matched exactly. Hence, $K_0$ is centred around $x=y$, and the simplest choice is to assume 
\begin{equation}
	K_0(x,y) = k \delta(x-y), 
\end{equation}
where the constant $k$ is the trading frequency. We only need to specify 
\begin{align*}
r(x,x) = x.
\end{align*}
\noindent This leads to the collision kernel
\begin{align*}
K(x,y,x',y') = \delta(x-y) \delta(x'-r(x,y)+a)\delta(y'-r(x,y)-a).
\end{align*}
For smooth test functions $\varphi = \varphi(x,y,x',y')$ we have
\begin{align*}
\langle K, \varphi \rangle &= \int_\R \varphi(x,x,r(x,x)-a, r(x,x)+a) dx \\
&= \int_\R \varphi(x,x,x-a,x+a) dx. 
\end{align*}
Applying this to \eqref{e:coll} leads to \eqref{e:boltzprice}.
\noindent For large numbers $N$ and $M$ it appears natural to consider the case of large $k$, since a high potential for transactions is available. One observes that the volume of transactions in \eqref{e:boltzprice} is given by $k f g$, hence the scaled density $\rho = c f g$ with $c=\frac{1}{\int_\R fg~dx}$ yields a density of currently traded prices. If $\rho$ is well centred, the mean, median, or maximum $\rho$ will give an estimate of the price $p(t)$, which we will also exploit in numerical comparison with the Lasry-Lions model. For large $k$ we will see that $\rho$ concentrates to a Dirac $\delta$ distribution centred at $p(t)$.
Note that the mean supply and demand price satisfy
\begin{align*}
\frac{d}{dt} \int_\R x f(x,t) dx &= -a\int_\R k f(x,t) g(x,t) dx \leq 0 \\
\frac{d}{dt} \int_\R x g(x,t) dx &= a \int_\R k f(x,t) g(x,t) dx \geq 0.
\end{align*}
This means that the average demand price is decreasing, while the average supply price increases as long as $fg$ does not vanish. This shift is clearly inherent in the collision model for the trades, since each collision decreases the demand price of an agent and increases the supply price of another one.

\noindent We finally mention that the above modelling is reasonably simple to study basic effects, but the agent-based interpretation easily allows various generalisations that can make it more realistic. An obvious first example is replacing the Brownian motion by more general stochastic differential equations, which simply changes the differential operator to some other parabolic equation, but leaves the collisions unchanged. Even herding effects could be modeled this way, e.g. by making the drift of a buyer dependent on the (empirical) density $f$, e.g. on $\int_\R x f(x,t)~dx. $ The extension to more complicated trading models is inherent in our model by specifying $K_0$ and $r$. A simple alternative model is
\begin{equation}
	K_0(x,y) = k {\bf 1}_{\{y\leq x\}}, \qquad r(x,y) = \frac{x+y}2,  
\end{equation}
which corresponds to a self-organised trading with using the mean value of the prices when $y \leq x$. 

\section{Limiting equations as $k \rightarrow \infty$}\label{s:kinfty} 

\noindent In this section we show that the difference $f-g$ of solutions of the system \eqref{e:boltzprice} converges to the solution of \eqref{e:lasrylionsdiff} as the transaction rate $k$ tends to infinity and that, as $k\rightarrow \infty$, $f = (f-g)^+$ and $g = (f-g)^-$. To emphasise the dependence of $f$ and $g$ on $k$, we use the notation $\fk = \fk(x,t)$ and $\gk = \gk(x,t)$ throughout this section. Without loss of generality we set $\frac{\sigma^2}{2} = 1$ in the rest of the paper. Note that existence and uniqueness of a non-negative solutions of \eqref{e:boltzprice} for $k \geq 0$ follows trivially from the estimates stated in this section. For the following we assume that $f_I$ and $g_I$ are independent of $k$ and satisfy:
\begin{enumerate}[label=(\textit{\Alph*})]
\item Let $f_I, g_I \geq 0$ on $\R$ and $f_I, g_I \in \mathcal{S}(\R)$. \label{a:A}
\end{enumerate}
Note that assumption \ref{a:A} can be weakened, i.e. 
\begin{align*}
f_I, g_I \in L^1(\R) \cap L^\infty(\R)\cap H^1(\R) \text{ with sufficiently fast decay as } \lvert x \rvert  \rightarrow \infty,
\end{align*} 
at the expense of additional technicalities.
\begin{prop}
Let assumption \ref{a:A} be satisfied. Then system \eqref{e:boltzprice} has unique positive solutions $\fk, \gk \in L^{\infty}(0,\infty; L^1(\R) \cap L^\infty(\R))$ bounded uniformly as $k \rightarrow \infty$. Furthermore 
\begin{align}\label{e:conv}
\fk \gk \rightharpoonup 0 \text{ in } \mathcal{D}'(\mathbb{R} \times [0,\infty]).
\end{align}
\end{prop}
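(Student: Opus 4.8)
\emph{Proof idea.} The plan is to establish, in order: (i) local-in-time well-posedness with non-negative solutions; (ii) $k$-uniform a priori bounds in $L^1(\R)\cap L^\infty(\R)$, which upgrade the local solutions to global ones; (iii) a $k$-uniform bound on $\int\!\int k\fk\gk$, from which \eqref{e:conv} follows at once. For (i) I would use the variation-of-constants formula with the heat semigroup $e^{t\partial_{xx}}$, which is a contraction simultaneously on $L^1(\R)$ and $L^\infty(\R)$; since the collision operator $(f,g)\mapsto -kfg+kf(\cdot+a)g(\cdot+a)$ is locally Lipschitz on $L^\infty(\R)$, Banach's fixed point theorem gives a unique local solution in $C([0,T];L^1(\R)\cap L^\infty(\R))$. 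Non-negativity is preserved along the Picard iteration because, freezing the previous (non-negative) iterate, the equation for $\fk$ reads $\fk_t=\fk_{xx}-k\gk\fk+(\text{non-negative source})$ with $\gk\ge0$, so the parabolic maximum principle applies, and symmetrically for $\gk$. Uniqueness on $[0,T]$ follows from a Gr\"onwall estimate for the difference of two solutions in $L^\infty(\R)$, the quadratic terms being controlled by the $L^\infty$ bound.

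The heart of the matter is the $L^\infty$ bound uniform in $k$, since the gain terms carry the large factor $k$. The device is to observe that the \emph{shifted sum}
\[
S(x,t):=\fk(x,t)+\gk(x+a,t)
\]
solves the pure heat equation: adding the $\fk$-equation evaluated at $x$ to the $\gk$-equation evaluated at $x+a$, the four collision terms cancel in pairs, leaving $S_t=S_{xx}$ with initial datum $f_I+g_I(\cdot+a)\in\mathcal S(\R)$. The maximum principle then gives $\norm{S(\cdot,t)}_{L^\infty(\R)}\le\norm{f_I}_{L^\infty(\R)}+\norm{g_I}_{L^\infty(\R)}$ for all $t$, and since $\fk,\gk\ge0$ we obtain $0\le\fk(x,t)\le S(x,t)$ and, from the companion quantity $\gk(x,t)+\fk(x-a,t)=S(x-a,t)$, also $0\le\gk(x,t)\le S(x-a,t)$. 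Together with mass conservation, $\norm{\fk(\cdot,t)}_{L^1(\R)}=\norm{f_I}_{L^1(\R)}$ and $\norm{\gk(\cdot,t)}_{L^1(\R)}=\norm{g_I}_{L^1(\R)}$, this yields the asserted $k$-uniform bound in $L^\infty(0,\infty;L^1(\R)\cap L^\infty(\R))$; comparison with the smooth, rapidly decaying $S$ together with standard parabolic regularity also supplies the decay of $\fk,\gk$ and their $x$-derivatives needed to justify the integrations by parts below, and hence the global continuation of the solutions.

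For \eqref{e:conv} I would use the mean demand price identity recorded in Section~\ref{s:modelboltz}, which after integration in time reads
\[
a\int_0^T\!\!\int_\R k\fk\gk\,dx\,dt=\int_\R x\,f_I(x)\,dx-\int_\R x\,\fk(x,T)\,dx\le\int_\R x\,f_I(x)\,dx+\int_\R\abs{x}\,S(x,T)\,dx,
\]
where the inequality uses $\fk\ge0$ and $\fk(x,T)\le S(x,T)$. Since $S$ evolves from Schwartz data under the heat equation, $\int_\R\abs{x}\,S(x,T)\,dx$ is finite and independent of $k$ (for instance by Cauchy--Schwarz and $\int_\R x^2\,S(x,T)\,dx=\int_\R x^2\,S(x,0)\,dx+2T\norm{S(\cdot,0)}_{L^1(\R)}$). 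Hence $\int_0^T\!\int_\R k\fk\gk\,dx\,dt\le C(T)$ uniformly in $k$, so that $\int_0^T\!\int_\R\fk\gk\,dx\,dt\le C(T)/k\to0$ as $k\to\infty$; thus $\fk\gk\to0$ in $\Loneloc(\R\times[0,\infty))$, and a fortiori in $\mathcal D'(\R\times[0,\infty))$. The only genuine obstacle is the $k$-uniform $L^\infty$ estimate --- no naive comparison or energy argument survives the factor $k$ in the gain terms --- and it is resolved entirely by the cancellation in $S=\fk(\cdot)+\gk(\cdot+a)$; the remaining steps are classical parabolic theory and the monotonicity of the mean prices.
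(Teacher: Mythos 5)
Your proposal is correct, and for the convergence statement \eqref{e:conv} it takes a genuinely different route from the paper. You and the paper share the decisive observation for the $k$-uniform bounds: the shifted sum $u(x,t)=\fk(x,t)+\gk(x+a,t)$ solves the pure heat equation, so the maximum principle and mass conservation give the $L^\infty(0,\infty;L^1\cap L^\infty)$ control uniformly in $k$ (the paper states existence/uniqueness follows ``trivially'' from these estimates; your Duhamel--Picard sketch with positivity preservation is a reasonable way to make that precise). Where you diverge is in proving $\fk\gk\rightharpoonup 0$: the paper divides the $\fk$-equation by $k$, concludes that any weak limit $\zeta^\infty$ of $\zeta^k=\fk\gk$ satisfies $\zeta^\infty(x,t)=\zeta^\infty(x+a,t)$, and invokes the fact that the only $a$-periodic function in $L^1(\R)$ is zero. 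You instead integrate the mean-bid-price identity $\frac{d}{dt}\int x\fk\,dx=-a\int k\fk\gk\,dx$ (already recorded in Section~2 of the paper) and bound the first moment of $\fk(\cdot,T)$ by that of $u$, obtaining $\int_0^T\!\int_\R k\fk\gk\,dx\,dt\le C(T)$ uniformly in $k$. This buys you more than the proposition asks for: a quantitative rate $\int_0^T\!\int_\R\fk\gk\,dx\,dt=O(1/k)$, hence strong $L^1$ convergence rather than mere distributional convergence, and it anticipates the uniform bound on the transaction measure $\mu^k=k\fk\gk$ that the paper only derives later from the equations for $F^k,G^k$. It is also arguably more robust, since the paper's argument requires identifying the weak-$*$ limit of $\zeta^k$ as an integrable function before applying the periodicity observation. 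The only points to tighten are the ones you flag yourself: the finiteness of $\int|x|\fk\,dx$ and the vanishing of the boundary terms in $\int x\fk_{xx}\,dx=0$, both of which follow for fixed $k$ from the comparison $0\le\fk\le u$ with $u(\cdot,t)\in\mathcal S(\R)$ and parabolic regularity.
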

\begin{proof}
\noindent Note that the function $u(x,t) = \fk(x,t) + \gk(x+a,t)$ satisfies the IVP for the heat equation 
\begin{align}\label{e:u}
u_t(x,t) &= u_{xx}(x,t) \\
u(x,0) &= f_I(x)+g_I(x+a).\nonumber
\end{align}
The heat equation has a unique positive solution $u = u(x,t) \in L^{\infty}(0,\infty; L^\infty(\R))$ for initial data $f_I, g_I \in \mathcal{S}(\R)$, $g_I(x) \geq 0, ~f_I(x) \geq 0$ independent of $k$. Hence we deduce that the weak limits $f^\infty$, $g^\infty$ satisfy
\begin{align*}
  f^\infty(\cdot ,t) \in L^1_+(\R) \cap L^\infty_x(\R) \text{ and } g^\infty(\cdot,t) \in L^1_+(\R) \cap  L^\infty_x(\R) \text{ uniformly in } t > 0.
\end{align*} 
To show the weak convergence of $\fk\gk$ to zero we rewrite the first equation in \eqref{e:boltzprice} as
\begin{align*}
\frac{1}{k}\fk_t(x,t) - \frac{1}{k}\fk_{xx}(x,t) &= - \fk(x,t)\gk(x,t) + \fk(x+a,t) \gk(x+a,t).
\end{align*}
We define $\zeta^k(x,t) = \fk(x,t) \gk(x,t)$, $\zeta^k \in L^\infty(0,\infty; L^1_+(\R) \cap L^\infty(\R))$. Then in the limit $k\rightarrow \infty$ we deduce that
\begin{align*}
-\zeta^k(x,t) + \zeta^k(x+a,t) = o(1) \in \mathcal{D}'(\R \times [0,\infty))  \quad \Rightarrow \quad \zeta^{\infty}(x,t) = \zeta^\infty(x+a,t). 
\end{align*}
The only periodic function in $L^1(\R)$ is the zero-function, which concludes the proof.
\end{proof}
\noindent Next we introduce as in \cite{CMP,CMW} the functions
\begin{align}\label{e:defFG}
F^k(x,t) = \sum_{l=0}^\infty \fk(x+al,t) \text{ and } G^k(x,t) = \sum_{l=0}^\infty \gk(x-al,t).
\end{align}
\begin{prop}\label{p:FG}
Let assumption \ref{a:A} be satisfied. The functions $F^k$ and $G^k$ defined by the infinite series \eqref{e:defFG} satisfy $F^k,G^k \in L^{\infty}(0,\infty; L^\infty(\R))$. The series converge locally uniform in $x$ and $t$ as $k \rightarrow \infty$.
\end{prop}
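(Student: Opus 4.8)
The plan is to majorise the entire series, for every $k$, by a single function built from the heat semigroup that does not depend on $k$. Recall from the proof of the previous Proposition that $u(x,t):=\fk(x,t)+\gk(x+a,t)$ solves the heat equation \eqref{e:u} with datum $h(x):=f_I(x)+g_I(x+a)$, which is non-negative, lies in $\mathcal S(\R)$, and is independent of $k$; since $\gk\ge 0$ this gives the pointwise bound $0\le \fk(x,t)\le u(x,t)$ for all $x,t$ and all $k\ge 0$. An analogous computation (the reaction terms cancelling in the same way) shows that $w(x,t):=\gk(x,t)+\fk(x-a,t)$ solves the heat equation with datum $\tilde h(x):=g_I(x)+f_I(x-a)\in\mathcal S(\R)$, so likewise $0\le \gk(x,t)\le w(x,t)$. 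It therefore suffices to prove all the assertions with $\fk,\gk$ replaced by the $k$-independent functions $u,w$.

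The elementary tool I would use is the comparison of a sum over an arithmetic progression with an integral: for a non-negative $v\in W^{1,1}(\R)$ and any $m\in\R$,
\[
 a\sum_{l=0}^{\infty} v(m+al)\ \le\ \int_{m}^{\infty} v(s)\,ds\ +\ a\int_{m}^{\infty}\abs{v'(s)}\,ds ,
\]
which follows from $v(m+al)\le \tfrac1a\int_0^a v(m+al+s)\,ds+\int_0^a\abs{v'(m+al+s)}\,ds$ on summing in $l$. Applied to $v=u(\cdot,t)$ with $m=x$: conservation of mass for non-negative heat data gives $\norm{u(\cdot,t)}_{L^1}=\norm{h}_{L^1}$, and since $\partial_x$ commutes with the heat semigroup (which is an $L^1$-contraction) one has $\norm{u_x(\cdot,t)}_{L^1}\le \norm{h'}_{L^1}$; both are finite, because $h\in\mathcal S(\R)$, and independent of $t$ and of $k$. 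Hence
\[
 0\le F^k(x,t)\le \sum_{l=0}^{\infty}u(x+al,t)\le \tfrac1a\bigl(\norm{h}_{L^1}+a\norm{h'}_{L^1}\bigr)=:C_0
\]
uniformly in $x$, $t$ and $k$, which yields $F^k\in L^{\infty}(0,\infty;L^{\infty}(\R))$ with a $k$-independent bound; $G^k$ is handled in the same way through the mirrored inequality and $w$.

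For the locally uniform convergence (uniformly in $k$) I would estimate the tails the same way. If $x\ge -R$ then all the arguments $x+al$ with $l\ge N$ exceed $M_N:=aN-R$, so, the integrands being non-negative,
\[
 0\le \sum_{l=N}^{\infty}\fk(x+al,t)\le \sum_{l=N}^{\infty}u(x+al,t)\le \tfrac1a\int_{M_N}^{\infty} u(s,t)\,ds+\int_{M_N}^{\infty}\abs{u_x(s,t)}\,ds .
\]
Writing $u(\cdot,t)=G_t*h$ one computes $\int_{M}^{\infty}u(s,t)\,ds=\int_{\R}h(y)\,\tfrac12\,\mathrm{erfc}\bigl(\tfrac{M-y}{\sqrt{4t}}\bigr)\,dy$; splitting the $y$-integral into a bulk $\abs{y}\le L$, on which the $\mathrm{erfc}$ factor is at most $\tfrac12\,\mathrm{erfc}(M/(4\sqrt T))$ and hence tends to $0$ as $M\to\infty$ uniformly for $t\in[0,T]$, and a tail $\abs{y}>L$ controlled by $\int_{\abs{y}>L}h\,dy$ (small for $L$ large), shows that this integral tends to $0$ as $M\to\infty$ uniformly for $t\in[0,T]$; the $u_x$-term is treated the same way with $\abs{h'}$ in place of $h$. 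Since $M_N\to\infty$, the right-hand side tends to $0$ as $N\to\infty$ uniformly on $[-R,R]\times[0,T]$ and uniformly in $k$, which is exactly the asserted local uniform convergence of the series defining $F^k$; $G^k$ is analogous.

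The algebra is routine; the delicate point is the time--uniformity. The global bound in $L^{\infty}(0,\infty;L^{\infty}(\R))$ genuinely relies on $\norm{u(\cdot,t)}_{L^1}$ and $\norm{u_x(\cdot,t)}_{L^1}$ not growing in $t$, while the tail estimate is only uniform on bounded time intervals --- as $t\to\infty$ roughly half of the heat mass escapes towards $+\infty$, so the tails do not decay uniformly on all of $(0,\infty)$ --- which is precisely why the statement claims only \emph{local} uniformity in $t$. As an alternative derivation of the $L^{\infty}$ bound, once the series is known to converge one may differentiate term by term and telescope the reaction terms to obtain $F^k_t-F^k_{xx}=-k\,\fk\gk\le 0$, so that $F^k$ is a subsolution of the heat equation with $k$-independent bounded datum $\sum_{l\ge 0}f_I(\cdot+al)$; but establishing convergence of the series requires the summation inequality above in any case.
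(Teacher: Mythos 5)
Your proof is correct, but it takes a genuinely different --- and in fact more robust --- route than the paper's. The paper's proof is a two-line sketch: it invokes $\fk(\cdot,t)\in\mathcal S(\R)$ to claim a pointwise majorant $c(1+|x|)^{-2}$ for the summands and then compares the series with the integral $\int_0^\infty c(1+|x+y|)^{-2}\,dy$. That argument leaves the uniformity of the constant $c$ in $k$ and, more seriously, in $t$ unaddressed; since $\sup_{t>0}(G_t*h)(x)$ decays only like $|x|^{-1}$ for compactly supported nonnegative $h$, no $t$-independent pointwise bound of order $(1+|x|)^{-2}$ is available, so the paper's comparison does not by itself deliver the claimed $L^\infty(0,\infty;L^\infty(\R))$ bound. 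You circumvent this by (i) majorising $\fk$ and $\gk$ by the $k$-independent heat solutions $u$ and $w=u(\cdot-a,\cdot)$, and (ii) bounding the whole arithmetic-progression sum, rather than each term, through the $W^{1,1}$ comparison $a\sum_l v(m+al)\le\int_m^\infty v+a\int_m^\infty|v'|$, which reduces everything to the conserved quantities $\|h\|_{L^1}$ and $\|h'\|_{L^1}$. This gives genuine uniformity in $x$, $t$ and $k$, and your erfc tail estimate supplies the locally uniform (in $x$ and $t$, uniformly in $k$) convergence that is actually needed later to pass to $F^\infty$ by dominated convergence. The only cosmetic slip is the argument of the erfc in the bulk estimate, which should read $\mathrm{erfc}\bigl((M-L)/\sqrt{4T}\bigr)$; this does not affect the conclusion. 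Your closing observation, that the tail estimate cannot be uniform on all of $(0,\infty)$, correctly explains why only local uniformity in $t$ is claimed.
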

\begin{proof}
\noindent Since $\fk(\cdot,t),~\gk(\cdot,t) \in \mathcal{S}(\R)$ we can estimate the series $F^k$  by the following integral
\begin{align*}
I(x) = \int_0^\infty \frac{c}{(1+\lvert x + y \rvert) ^ 2} dy = \int_x^\infty \frac{c}{(1 + \lvert z \rvert )^2} dz \text{ for some } c>0.
\end{align*}
For $x > 0$ the integral $I(x) = \frac{1}{q-1}\frac{1}{(1+x)}$ tends to $0$ as $x \rightarrow \infty$. In the case $x < 0$ we obtain the following estimate for the integral\begin{align*}
I(x) &= \int_{-\lvert x \rvert}^0 \frac{1}{(1+z)^2} dz + \int_0^\infty \frac{1}{(1+z)^2} dz\\
&\leq c_1 + \frac{c_2}{(1+\lvert x \rvert)}, \text{ where } c_1, c_2 \text{ are independent of x.} \qedhere
\end{align*}
\end{proof}
\noindent Note that $F^k$ and $G^k$ satisfy: 
\begin{subequations}\label{e:FG}
\begin{align}
F^k_t(x,t) &= -k \fk(x,t) \gk(x,t) + F^k_{xx}(x,t)\\
G^k_t(x,t) &= -k \fk(x,t) \gk(x,t) + G^k_{xx}(x,t).
\end{align}
\end{subequations}
\noindent Then the difference $\Phi :=F^k-G^k$ satisfies the heat equation $\Phi_t = \Phi_{xx}$. Note that $\Phi$ is independent of $k$ for $f_I, g_I$ independent of $k$.
From system \eqref{e:FG} we deduce that $\muk = k \fk \gk$ converges as $k \rightarrow \infty$ to a locally bounded positive measure on $\R \times [0,\infty)$.  Then the limiting functions $f^\infty$ and $g^\infty$ satisfy the following system
\begin{subequations}\label{e:limitfg}
\begin{align}
f^\infty_t(x,t) &= -\mu^\infty(x,t) + \mu^\infty(x+a,t) +f_{xx}^\infty(x,t) \\
g^\infty_t(x,t) &= -\mu^\infty(x,t) + \mu^\infty(x-a,t) + g_{xx}^\infty(x,t).
\end{align}
\end{subequations}
Note that the difference $v= f^{\infty}-g^{\infty}$ satisfies a parabolic PDE with a similar structure as the price formation model \eqref{e:lasrylionsdiff}, i.e.
\begin{align}\label{e:difflimit}
v_t(x,t) = v_{xx}(x,t) + \mu^\infty(x+a,t) - \mu^\infty(x-a,t).
\end{align}

\subsection{A priori estimates} We derive various a priori estimates for the solutions $\fk$ and $\gk$ of system \eqref{e:boltzprice}, which will we used for the identification
of the limiting system as $k\rightarrow \infty$.
\begin{prop}
The solution of \eqref{e:boltzprice} satisfies the following a-priori estimate:
\begin{align}
\int_0^T \int_{\mathbb{R}} (\fk_x(x,t) + \gk_x(x,t))^2 dx dt \leq const, 
\end{align}
uniformly as $k \rightarrow \infty$.
\end{prop}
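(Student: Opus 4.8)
The idea is to reduce the estimate to a $k$-uniform bound on the total traded volume $\int_0^T\!\int_\R k\fk\gk\,dx\,dt$, and then to close with a standard parabolic energy estimate for the sum $w:=\fk+\gk$; adding the two equations of \eqref{e:boltzprice} gives, with $\zeta:=k\fk\gk\ge0$,
\[
w_t=w_{xx}+\bigl(\zeta(\cdot+a)-2\zeta+\zeta(\cdot-a)\bigr).
\]

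\emph{Bounding the traded volume.} Testing the first equation of \eqref{e:boltzprice} with the weight $x$ and integrating by parts — the boundary terms vanish by the spatial decay of $\fk$ — reproduces the identity from Section~\ref{s:modelboltz},
\[
\frac{d}{dt}\int_\R x\,\fk(x,t)\,dx=-a\int_\R k\,\fk(x,t)\gk(x,t)\,dx ,
\]
so that $\int_0^T\!\int_\R k\fk\gk\,dx\,dt=\tfrac1a\bigl(\int_\R xf_I\,dx-\int_\R x\,\fk(x,T)\,dx\bigr)$, and it is enough to bound $\int_\R x\,\fk(x,T)\,dx$ from below uniformly in $k$. This follows from a $k$-uniform bound on the second moment of $\fk$, via $\bigl|\int_\R x\fk\,dx\bigr|\le\int_\R(1+x^2)\fk\,dx=\|f_I\|_{L^1}+\int_\R x^2\fk\,dx$. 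To get the latter I use the key fact already noted above: the function $u(x,t):=\fk(x,t)+\gk(x+a,t)$ solves the heat IVP \eqref{e:u} with the $k$-independent, Schwartz datum $f_I+g_I(\cdot+a)$; hence $u$ itself is independent of $k$ and Schwartz in $x$ for every $t\ge0$, and, integrating by parts twice,
\[
\frac{d}{dt}\int_\R x^2 u(x,t)\,dx=\int_\R x^2 u_{xx}\,dx=2\int_\R u\,dx=2\bigl(\|f_I\|_{L^1}+\|g_I\|_{L^1}\bigr),
\]
so $\int_\R x^2 u(x,t)\,dx\le C(T)$ on $[0,T]$. Since $0\le\fk\le u$ pointwise (because $\gk\ge0$), the same bound holds for $\int_\R x^2\fk(x,t)\,dx$, and feeding this back yields $\int_0^T\!\int_\R k\fk\gk\,dx\,dt\le C(T)$, uniformly in $k$.

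\emph{Energy estimate.} Multiply the $w$-equation by $w$, integrate over $\R$, and shift variables in the difference term; by the symmetry of the centred second-difference operator this transfers it onto $w$, and since $\zeta\ge0$,
\[
\frac12\frac{d}{dt}\int_\R w^2\,dx+\int_\R w_x^2\,dx=\int_\R\bigl(w(x+a)-2w(x)+w(x-a)\bigr)\zeta(x)\,dx\le 4\,\|w(\cdot,t)\|_{L^\infty}\!\int_\R\zeta\,dx .
\]
From $u=\fk+\gk(\cdot+a)$ and positivity, $\|\fk(\cdot,t)\|_{L^\infty},\|\gk(\cdot,t)\|_{L^\infty}\le\|u(\cdot,t)\|_{L^\infty}\le\|f_I+g_I(\cdot+a)\|_{L^\infty}$, so $\|w(\cdot,t)\|_{L^\infty}$ is bounded uniformly in $k$ and $t$. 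Integrating over $[0,T]$, dropping the nonnegative term $\tfrac12\int_\R w^2(\cdot,T)\,dx$, and using the traded-volume bound,
\[
\int_0^T\!\int_\R(\fk_x+\gk_x)^2\,dx\,dt=\int_0^T\!\int_\R w_x^2\,dx\,dt\le\tfrac12\|f_I+g_I\|_{L^2}^2+C(T),
\]
uniformly in $k$, which is the claim.

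The main obstacle is the traded-volume bound: the nonlinearities in \eqref{e:boltzprice} carry the factor $k$, so the energy estimate closes only once one knows $\int_0^T\!\int_\R k\fk\gk$ stays bounded as $k\to\infty$. Neither pointwise bounds (e.g.\ $k\fk\gk\le\tfrac k2((\fk)^2+(\gk)^2)$) nor the comparison functions $F^k,G^k$ — which are bounded but not integrable in $x$ — suffice here; it is precisely the heat-equation structure of the combination $\fk(x)+\gk(x+a)$ that yields a $k$-uniform second moment, hence a $k$-uniform bound on the centre of mass of $\fk$, and thereby on the total volume of trades. The remaining technical points — enough regularity and spatial decay of $\fk,\gk$ to justify the integrations by parts — follow from parabolic regularity together with assumption~\ref{a:A}.
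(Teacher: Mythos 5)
Your proof is correct, but it follows a genuinely different route from the paper's. The paper multiplies the two equations of \eqref{e:boltzprice} by $\fk$ and $\gk$ separately, sums, and is then left with the bad term $k\int \fk\gk\,(u(x,t)+u(x-a,t))\,dx$; it controls this by pairing the equation \eqref{e:FG} for $F^k$ with the heat-equation solution $u$, which yields the \emph{weighted} bound $\int_0^T\!\int_\R k\fk\gk\,(u+u(\cdot+a))\,dx\,dt\le C$. You instead work with the single equation for $w=\fk+\gk$ and reduce everything to the \emph{unweighted} global bound $\int_0^T\!\int_\R k\fk\gk\,dx\,dt\le C$, which you extract from the monotone decay of the mean bid price $\int x\fk\,dx$ combined with a $k$-uniform second-moment bound inherited from $0\le\fk\le u$. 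Your traded-volume bound is in fact stronger than what the paper establishes at this stage (it subsumes the local estimate $\int_0^T\!\int_{|x|<R}k\fk\gk\le C$ that the paper derives separately afterwards with cutoff test functions), and your energy identity for $w$ is cleaner than the paper's computation, which conflates $\int(\fk_x)^2+\int(\gk_x)^2$ with $\int(\fk_x+\gk_x)^2$ — harmless, but your version avoids it. The trade-offs: your argument needs moment control, hence genuinely uses the decay in assumption \ref{a:A} (fine here, since the data are Schwartz), and your constant carries a factor $1/a$, so it degenerates as $a\to 0$ — irrelevant in this section where $a$ is fixed, but it means your route would not survive the scaling regime of Section \ref{s:kac}, whereas the paper's weighted estimate is insensitive to $a$. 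The remaining integrations by parts (vanishing of $\int x\fk_{xx}\,dx$, finiteness of first moments) are at the same level of informality as the paper's own treatment and are justified by $0\le\fk\le u$ with $u$ a heat evolution of Schwartz data.
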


\begin{proof}
In the following calculations we neglect the dependence of the functions $\fk,~\gk, u, F^k$ on the variables $x$ and $t$, and only state their arguments if necessary.
We multiply system \eqref{e:boltzprice} with $\fk$ and $\gk$ respectively and obtain
\begin{align*}
\frac{1}{2} \frac{d}{dt} \int (\fk)^2 dx &= - k \int (\fk)^2 \gk~ dx + k \int \fk(x+a,t) \fk(x,t) \gk(x+a,t)~dx - \int (\fk_x)^2~dx,\\
\frac{1}{2} \frac{d}{dt} \int ( \gk)^2 dx &= - k \int \fk (\gk)^2~dx + k \int \fk(x-a,t) \gk(x,t) \gk(x-a,t)~dx - \int (\gk_x)^2~dx.
\end{align*}
With changes of variables in the second integral on the right hand side of both equations we deduce for the sum of both equations that
\begin{align*}
\frac{1}{2} \frac{d}{dt} \int ((\fk)^2+(\gk)^2) dx &= -k \int [ \fk\gk(\fk+\gk) - \fk\gk(\fk(x-a,t)+\gk(x+a,t))] dx \\
&\phantom{-k} - \int (\gk_x+\fk_x)^2 dx.
\end{align*}
Next we use that $\gk(x+a,t) = u(x,t) - \fk(x,t) $ and $\fk(x-a,t) = u(x-a,t) - \gk(x,t)$ and obtain
\begin{align}\label{e:diff}
\begin{split}
  \frac{1}{2} \frac{d}{dt} \int& ((\fk)^2+(\gk)^2) dx = -k \int [ \fk\gk(\fk+\gk) + (\fk)^2\gk + \fk(\gk)^2] dx  \\
&\phantom{=}+\int k \fk\gk (u(x,t) - u(x-a,t)) dx - \int (\gk_x+\fk_x)^2 dx.
\end{split}
\end{align}
Multiplication of \eqref{e:FG} by $u = u(x,t)$ and, resp., by $u(x+a,t)$ gives
\begin{align*}
\int (F^k u)_t dx - \int F^k u_t dx = \int F^k u_{xx} dx - k \int \fk \gk u dx.
\end{align*}
Note that all integrals are well defined, since $u = u(x,t)$ decays algebraically as $\lvert x \rvert \rightarrow \infty$.
Hence we obtain after integration over the time interval $(0,T)$ that
\begin{align}\label{e:est1}
\int_0^T \int_{\R} k \fk(x,t) \gk(x,t) (u(x,t) + u(x+a,t))~dx~dt  \leq const 
\end{align}
uniformly as $k \rightarrow \infty$. Using \eqref{e:est1} in \eqref{e:diff} concludes the proof.
\end{proof}

\noindent Multiplying the first equation in system \eqref{e:FG} by a test function $\varphi \in C_0^\infty(\mathbb{R} \times [0,\infty)), \varphi \geq 0$, we obtain
\begin{align*}
  \iint k \fk \gk \varphi dx dt = \iint F^k \varphi_t dx dt + \int F^k \varphi \mid_{t=0}^T dx + \iint F^k \varphi_{xx} dx dt \leq const.
\end{align*} 
If $\varphi(\cdot,t)$ has compact support on $(-3R,3R)$ and $\varphi \equiv 1$ in $(-2R, 2R)$ we deduce that
\begin{align*}
\int_0^T \int_{\lvert x \rvert < R}k \fk(x,t) \gk(x,t) dx dt \leq const
\end{align*}
uniformly in $k$. Then the functions $\fk$ and $\gk$ satisfy
\begin{align}\label{e:regtime}
\fk_t, \gk_t \in \Loneloc(\R \times [0,\infty)) + \Ltwoloc(0,\infty; H^{-1}(\R)), 
\end{align}
and 
\begin{align}
\fk,\gk \in \Ltwoloc(0,\infty; \Honeloc(\R))
\end{align}
uniformly in $k$.

\subsection*{Strong convergence} Let $\Omega$ be an interval on the real line $\Omega = (-R,R)$. To show strong convergence of \eqref{e:conv} we use a generalised version of the Aubin-Lions lemma, cf. \cite{Lions}. We define the following 
 spaces
\begin{align}
  \underbrace{H^1 (\Omega)}_{= B_0} \subset \subset \underbrace{L^2(\Omega)}_{=B} \subseteq \underbrace{H^{-1}(\Omega)}_{= B_1}.
\end{align}
For a function $v \in L^1(0,T; H^{-1}(\Omega))$ we estimate the corresponding norm by
\begin{align*}
  \int_0^T \sup_{\varphi \in H^1_0(\Omega)} \frac{\int u \varphi dx}{\lVert \varphi \rVert_{H^1_0(\Omega)}} dx~dt &\leq \int_0^T \sup_{\varphi \in H^1_0(\Omega)} \frac{\int \lvert u \rvert  dx~dt \lVert \varphi \rVert_{L^\infty(\Omega)} }{\lVert \varphi \rVert_{H^1_0(\Omega)}} dx~dt \\
& \leq c \int_0^T \int_{\Omega} \lvert u (x,t) \rvert dxdt.
\end{align*}
 Due to \eqref{e:regtime} and the previous estimate we deduce that $\fk_t, \gk_t \in L^1(0,T; H^{-1}(\Omega))$. Since $\fk$ and $\gk$ converge weakly to $f^\infty$ and $g^\infty$ in $L^2(0,T; L^2(\Omega))$, we use the general version
of the Aubin-Lions lemma, cf. \cite{Lions}, to obtain that
\begin{align}
\lim_{k \rightarrow \infty} (\fk, \gk) \rightarrow (f^\infty, g^\infty) \text{ in } (L^2(0,T; L^2(\Omega)))^2,
\end{align} 
for every bounded interval $\Omega$. Hence we conclude strong convergence of the product in $\Loneloc (0,\infty;\R)$, i.e.
\begin{align}\label{e:stronglim}
\lim_{k \rightarrow \infty} \fk \gk = f^\infty g^\infty = 0.
\end{align}

\subsection{The limiting equations as $k\rightarrow \infty$} Now we are able to identify the price formation model given by Lasry $\&$ Lions \eqref{e:lasrylions} in the
limiting case $k\rightarrow \infty$. We have already shown that the Boltzmann type price formation model has a similar structure in the limit $k \rightarrow \infty$, see \eqref{e:difflimit}.
Therefore it remains to show that the measures on the right hand side of \eqref{e:difflimit} are indeed the transaction rates defined by Lasry $\&$ Lions. We shall do this for the
 initial data $f_I$ and $g_I$, which satisfies the compatibility conditions necessary for \eqref{e:lasrylions}, i.e.
\begin{enumerate}[label=(\textit{\Alph*}), start=2]
\item  $\sup \lbrace f_I(x) > 0 \rbrace \leq \inf \lbrace g_I(x) > 0 \rbrace$. \label{a:wellprep}
\end{enumerate}  
We  find by applying the Lebesgue's dominated convergence theorem that
\begin{align*} 
F^\infty(x,t) = \sum_{l=0}^\infty f^\infty(x + al,t) \text{ and  } G^\infty(x,t) = \sum_{l=0}^\infty g^\infty(x-al,t).
\end{align*}
Then the difference $\Phi(x,t) = F^\infty(x,t)-G^\infty(x,t)$ solves the IVP for the heat equation 
\begin{subequations}
\begin{align}
\Phi_t(x,t) &= \Delta \Phi(x,t)\\
\Phi(x,0) &= \Phi_I(x) := \sum_{l=0}^\infty f_I(x+al) - \sum_{l=0}^\infty g_I(x-al).
\end{align}
\end{subequations}
\noindent We start with an additional regularity result.
\begin{prop}
The solutions $\fk$ and $\gk$ of \eqref{e:boltzprice} satisfy $\fk,\gk \in L^1(0,\infty; C_b(\R))$ for almost every time $t>0$. This implies the continuity of 
$F^k$ and $G^k$ for almost every time $t>0$. 
\end{prop}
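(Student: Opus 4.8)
The plan is to derive both assertions from the variation-of-constants (Duhamel) representation for the heat equation, exploiting only the regularity $\fk,\gk\in L^\infty(0,\infty;L^1(\R)\cap L^\infty(\R))$ established earlier in this section. Denote by $e^{\tau\partial_{xx}}$ the heat semigroup and by $\Gamma(\cdot,\tau)$ its kernel, and set
\[
h^k(x,t):=-k\,\fk(x,t)\gk(x,t)+k\,\fk(x+a,t)\gk(x+a,t),
\]
so that the first equation in \eqref{e:boltzprice} reads $\fk_t-\fk_{xx}=h^k$. For each fixed $k$ one has $h^k\in L^\infty(0,\infty;L^1(\R)\cap L^\infty(\R))$, because $\|\fk\gk\|_{L^1}\le\|\fk\|_{L^\infty}\|\gk\|_{L^1}$ and $\|\fk\gk\|_{L^\infty}\le\|\fk\|_{L^\infty}\|\gk\|_{L^\infty}$ are bounded uniformly in $t$. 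Since $\fk$ is a bounded solution of the inhomogeneous heat equation with datum $f_I$ and source $h^k$, Tychonoff uniqueness of bounded solutions identifies it with
\[
\fk(\cdot,t)=e^{t\partial_{xx}}f_I+\int_0^t e^{(t-s)\partial_{xx}}h^k(\cdot,s)\,ds,
\]
and likewise for $\gk$.

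I would then inspect the two terms separately. For the free term: since $f_I\in\mathcal{S}(\R)\subset L^\infty(\R)$, convolution with $\Gamma(\cdot,t)\in L^1(\R)$ yields $e^{t\partial_{xx}}f_I\in C_b(\R)$ for every $t>0$, with $\|e^{t\partial_{xx}}f_I\|_{L^\infty}\le\|f_I\|_{L^\infty}$. For the Duhamel term: fix $t>0$; for each $0<s<t$ the kernel $\Gamma(\cdot,t-s)$ lies in $C_b(\R)\cap L^1(\R)$, so $\Gamma(\cdot,t-s)\ast h^k(\cdot,s)\in C_b(\R)$ and
\[
\bigl\|\Gamma(\cdot,t-s)\ast h^k(\cdot,s)\bigr\|_{L^\infty}\le\min\Bigl(\|h^k(\cdot,s)\|_{L^\infty},\ \tfrac{c}{\sqrt{t-s}}\,\|h^k(\cdot,s)\|_{L^1}\Bigr).
\]
This bound is integrable on $(0,t)$ — it is majorised by $\|h^k\|_{L^\infty(0,\infty;L^\infty)}$ away from $s=t$ and by $c(t-s)^{-1/2}\|h^k\|_{L^\infty(0,\infty;L^1)}$ near $s=t$ — and, being independent of $x$, it serves as a dominating function: dominated convergence shows that $x\mapsto\int_0^t\bigl(\Gamma(\cdot,t-s)\ast h^k(\cdot,s)\bigr)(x)\,ds$ is continuous, with supremum at most $c_k\min(t,\sqrt t)$. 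Hence $\fk(\cdot,t)\in C_b(\R)$ for every $t>0$ (in particular for a.e.\ $t$), with $t\mapsto\|\fk(\cdot,t)\|_{C_b(\R)}$ locally bounded and therefore in $L^1_{loc}([0,\infty))$; the identical estimate applies to $\gk$. This gives the first assertion.

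For the continuity of $F^k$ and $G^k$ I would argue through the dominating heat solution from the proof accompanying \eqref{e:u}: $u(x,t)=\fk(x,t)+\gk(x+a,t)$ solves the heat equation with datum $f_I+g_I(\cdot+a)\in\mathcal{S}(\R)$, so $u(\cdot,t)\in\mathcal{S}(\R)$ for $t>0$ (the heat semigroup preserves $\mathcal{S}(\R)$). Together with $\fk,\gk\ge0$ this yields $0\le\fk(x,t)\le u(x,t)$ and $0\le\gk(x,t)\le u(x-a,t)$, whence
\[
0\le\sum_{l=0}^{\infty}\fk(x+al,t)\le\sum_{l=0}^{\infty}u(x+al,t),
\]
and the majorant series converges locally uniformly in $x$ by the integral comparison carried out in the proof of Proposition \ref{p:FG} (the decay constant of $u(\cdot,t)$ depends on $t$ but not on $k$, which is all that is needed here). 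Thus $F^k(\cdot,t)=\sum_{l\ge0}\fk(\cdot+al,t)$ is a locally uniform limit of continuous functions — each summand is continuous by the previous step — hence continuous for a.e.\ $t>0$; the same reasoning applies to $G^k$.

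I expect the only genuinely delicate point to be the behaviour of the Duhamel integrand near the diagonal $s=t$, where the pointwise bound on $\Gamma(\cdot,t-s)\ast h^k(\cdot,s)$ is controlled by the smoothing estimate $\|\Gamma(\cdot,\tau)\ast\phi\|_{L^\infty}\le c\,\tau^{-1/2}\|\phi\|_{L^1}$; its integrable singularity $\tau^{-1/2}$ is exactly what keeps the time integral finite and lets the dominated-convergence argument go through. The remaining ingredients — the semigroup mapping properties, the uniqueness step, and the passage from $\fk,\gk$ to the series $F^k,G^k$ — are routine.
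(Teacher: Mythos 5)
Your argument is correct, but it is a genuinely different route from the one the paper takes. The paper's proof is a one-liner: it invokes the embedding $H^1(\R)\hookrightarrow C_b(\R)$ (mislabelled there as ``compact'') together with the a priori estimate $\fk,\gk\in\Ltwoloc(0,\infty;\Honeloc(\R))$ derived in the preceding subsection, so that for a.e.\ $t$ the trace $\fk(\cdot,t)$ lies in $H^1$ and is therefore bounded and continuous; the continuity of $F^k,G^k$ then follows from the locally uniform convergence of the series already established in Proposition \ref{p:FG}. You instead bypass the energy estimates entirely and work from the Duhamel representation, using only $\fk,\gk\in L^\infty(0,\infty;L^1\cap L^\infty)$ and the mapping properties of the heat kernel. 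What your approach buys: continuity of $\fk(\cdot,t),\gk(\cdot,t)$ for \emph{every} $t>0$ rather than a.e.\ $t$, an explicit quantitative bound on $\|\fk(\cdot,t)\|_{C_b}$, and independence from the $L^2_tH^1_x$ estimate (which in the paper is only local in $x$, so the appeal to a global embedding $H^1(\R)\hookrightarrow C_b(\R)$ is itself slightly strained). What the paper's approach buys is brevity, since the needed estimate is already on the table. Two minor remarks on your write-up: the ``delicate point'' you flag at the diagonal $s=t$ is not actually delicate, since the uniform bound $\|\Gamma(\cdot,t-s)\ast h^k(\cdot,s)\|_{L^\infty}\le\|h^k(\cdot,s)\|_{L^\infty}$ is already integrable on $(0,t)$ without the $(t-s)^{-1/2}$ smoothing estimate; and your treatment of $F^k,G^k$ via domination by the Schwartz-class heat solution $u$ is in fact cleaner than the paper's, which asserts $\fk(\cdot,t)\in\mathcal{S}(\R)$ directly. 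Both proofs establish only $L^1_{loc}$ in time for the $C_b$-norm, but so does the paper's, and nothing stronger is used later.
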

\begin{proof}
The proposition follows from the compact embedding of $H^1(\R)$ into $C_b(\R)$, therefore
\begin{align*}
&  \fk,\gk \in L^1(0,\infty; C_b(\R)) \text{ for almost every time } t. \hspace*{2cm} \qedhere
\end{align*}
\end{proof}
\noindent We now prove the main result of this section.
\begin{theorem} \label{t:id}
Let $f_I$ and $g_I$ satisfy assumption \ref{a:A}. 
\begin{enumerate}
\item[(i)] Then the limiting functions $f^\infty$ and $g^\infty$ are given by:
\begin{align*}
&f^\infty(x,t) = \Phi^+(x,t)-\Phi^+(x+a,t), \text{ and } g^\infty(x,t) = \Phi^-(x,t) - \Phi^-(x-a,t).
\end{align*}
\item[(ii)] Additionally let \ref{a:wellprep} hold. Then $f^\infty$ and $g^\infty$ satisfy system \eqref{e:lasrylions}.
\end{enumerate}
\end{theorem}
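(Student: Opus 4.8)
The plan is to exploit the integrated quantities $F^\infty(x,t)=\sum_{l\ge0}f^\infty(x+al,t)$, $G^\infty(x,t)=\sum_{l\ge0}g^\infty(x-al,t)$ (whose series converge locally uniformly by Proposition~\ref{p:FG}) together with the fact that $\Phi:=F^\infty-G^\infty$ solves the plain heat equation with the $k$-independent datum $\Phi_I$ and is therefore uniquely determined. Telescoping the defining series gives the exact identities $f^\infty(x,t)=F^\infty(x,t)-F^\infty(x+a,t)$ and $g^\infty(x,t)=G^\infty(x,t)-G^\infty(x-a,t)$, so assertion (i) is equivalent to $F^\infty=\Phi^+$, $G^\infty=\Phi^-$. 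Since $F^\infty,G^\infty\ge0$ and $F^\infty-G^\infty=\Phi$, one always has $F^\infty\ge\Phi^+$, $G^\infty\ge\Phi^-$, and
\[
\psi^\infty:=\min(F^\infty,G^\infty)=F^\infty-\Phi^+=G^\infty-\Phi^-\ge0.
\]
Because $F^\infty(\cdot,t)\to0$ as $x\to+\infty$ (again Proposition~\ref{p:FG}), the two claimed formulas hold precisely when $\psi^\infty$ is $a$-periodic in $x$, i.e.\ precisely when $\psi^\infty\equiv0$; hence all of (i) reduces to showing $F^\infty G^\infty=0$ a.e.\ for $t>0$.

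This is the crux, and I expect it to be the main obstacle, since one must convert the ``infinitely fast'' annihilation of the reaction term into a clean geometric statement. The cleanest route is to prove that for a.e.\ $t>0$ the limiting populations are segregated in an \emph{ordered} way: $\mathrm{supp}\,f^\infty(\cdot,t)$ lies to the left of $\mathrm{supp}\,g^\infty(\cdot,t)$. Granting this, $F^\infty(\cdot,t)$ vanishes to the right of $s(t):=\sup\mathrm{supp}\,f^\infty(\cdot,t)$ while $G^\infty(\cdot,t)$ vanishes to its left, so $\psi^\infty=\min(F^\infty,G^\infty)=0$. The ordering should be read off the finite-$k$ dynamics: wherever $\fk$ and $\gk$ both charge a price, the reaction removes mass at rate $k\fk\gk$ and, through the shift gains $+k\fk(x+a)\gk(x+a)$ in the $\fk$-equation and $+k\fk(x-a)\gk(x-a)$ in the $\gk$-equation, re-injects it strictly to the left for buyers and strictly to the right for vendors; together with the strong $L^2_{loc}$-convergence $\fk\to f^\infty$, $\gk\to g^\infty$, $\fk\gk\to f^\infty g^\infty=0$ established above, passing to the limit forces the supports to separate with buyers on the left. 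An equivalent, perhaps more robust, formulation stays with the $F,G$ picture: $\psi^\infty$ satisfies $\psi^\infty_t-\psi^\infty_{xx}=\nu-\mu^\infty$, where $\mu^\infty=\lim_k k\fk\gk$ is the limiting transaction measure and $\nu\ge0$ is the local-time measure of $\Phi$ at level $0$ (so $(\Phi^+)_t-(\Phi^+)_{xx}=-\nu$); showing $\mu^\infty\ge\nu$ then makes $\psi^\infty\ge0$ a bounded subsolution of the heat equation with vanishing initial data, hence $\psi^\infty\equiv0$. Under the compatibility assumption \ref{a:wellprep} the picture is simplest, because then $\min(F_I,G_I)=0$, there is no initial layer and $\psi^\infty(\cdot,0)=0$; under \ref{a:A} alone one has to absorb the overlap $\min(F_I,G_I)$ into the (infinitesimally short) initial layer analysed in Section~\ref{s:initlayer}, after which the same conclusion holds for $t>0$.

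For part (ii) I would feed the function $\Phi^+$ from (i) into those formulas. Under \ref{a:wellprep} the datum $\Phi_I=F_I-G_I$ is $\ge0$ on $\{x\le p_0\}$, vanishes on $[p_0,q_0]$ and is $\le0$ on $\{x\ge q_0\}$, with $p_0:=\sup\{f_I>0\}\le q_0:=\inf\{g_I>0\}$ (because $F_I$ is supported in $\{x\le p_0\}$ and $G_I$ in $\{x\ge q_0\}$), so $\Phi_I$ changes sign once. Since in addition $\Phi(\cdot,t)\to\|f_I\|_{L^1}/a>0$ as $x\to-\infty$ and $\to-\|g_I\|_{L^1}/a<0$ as $x\to+\infty$, the variation-diminishing (total positivity) property of the heat kernel yields that for every $t>0$ the profile $\Phi(\cdot,t)$ has exactly one sign change, at a continuous curve $x=p(t)$ with $\Phi>0$ on $\{x<p(t)\}$, $\Phi<0$ on $\{x>p(t)\}$ and $\Phi_x(p(t),t)<0$ for a.e.\ $t$. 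Then $\Phi^+=\Phi\,\mathbf{1}_{\{x<p(t)\}}$ is smooth on $\{x<p(t)\}$ and, with $\lambda(t):=-\Phi_x(p(t),t)\ge0$, satisfies $(\Phi^+)_t-(\Phi^+)_{xx}=-\lambda(t)\delta(x-p(t))$; substituting into $f^\infty(x,t)=\Phi^+(x,t)-\Phi^+(x+a,t)$ gives $f^\infty\equiv0$ for $x>p(t)$, the equation $f^\infty_t=f^\infty_{xx}+\lambda(t)\delta(x-p(t)+a)$ for $x<p(t)$, and the boundary relations $f^\infty(p(t),t)=0$, $\lambda(t)=-f^\infty_x(p(t),t)$; the same computation for $g^\infty(x,t)=\Phi^-(x,t)-\Phi^-(x-a,t)$ yields the second line of \eqref{e:lasrylions} (and, consistently, $v=f^\infty-g^\infty$ solves \eqref{e:lasrylionsdiff} with transaction rate $\lambda$, i.e.\ $\mu^\infty=\lambda(t)\delta(x-p(t))$). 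The only non-routine ingredient in part (ii) is the propagation of the single sign change and the a.e.-in-time simplicity of the zero of $\Phi$ at $p(t)$, which are standard for one-dimensional heat flows.
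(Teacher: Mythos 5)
Your reduction of part (i) to the single identity $F^\infty G^\infty=0$ (equivalently $\psi^\infty:=\min(F^\infty,G^\infty)\equiv 0$, using $F^\infty\to 0$ as $x\to+\infty$ to kill the $a$-periodic ambiguity) is exactly the content of the paper's Lemma \ref{p:identifylimit}(i), and your treatment of part (ii) -- single sign change of $\Phi$ propagated by the heat flow, $(\Phi^+)_t-(\Phi^+)_{xx}=-\lambda(t)\delta(x-p(t))$, then substitution -- is sound and in fact more self-contained than the paper, which delegates this to \cite{CMP,CMW}. The genuine gap is that you never actually prove $F^\infty G^\infty=0$. Route (a) aims at the stronger statement that $\mathrm{supp}\,f^\infty(\cdot,t)$ lies entirely to the left of $\mathrm{supp}\,g^\infty(\cdot,t)$; but part (i) is asserted under \ref{a:A} alone, where $\Phi(\cdot,t)$ may have several sign changes and the supports of $f^\infty=\Phi^+(\cdot)-\Phi^+(\cdot+a)$ and $g^\infty=\Phi^-(\cdot)-\Phi^-(\cdot-a)$ interlace, so ordered separation is simply false in that generality (note that $F^\infty G^\infty=0$ only forbids pairs $y\in\mathrm{supp}\,f^\infty$, $z\in\mathrm{supp}\,g^\infty$ with $y-z\in a\N_0$, which is strictly weaker than ordering). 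Moreover the mechanism you invoke (``mass is re-injected to the left for buyers\dots passing to the limit forces separation'') is a heuristic, not an argument. Route (b) reduces everything to the inequality $\mu^\infty\ge\nu$ between the limiting transaction measure and the local time of $\Phi$ at zero, which you do not prove and which is essentially the whole difficulty in disguise; in addition, under \ref{a:A} alone $\psi^\infty(\cdot,0^+)=\min(F_I,G_I)$ need not vanish, and ``absorbing the overlap into the initial layer'' is precisely the point that needs an argument.

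The missing idea is the paper's lattice-propagation step, which uses an ingredient absent from your proposal: the function $u(x,t)=f^k(x,t)+g^k(x+a,t)$ solves the plain heat equation \eqref{e:u} and is therefore strictly positive for $t>0$. Combining this with the pointwise segregation $f^\infty g^\infty=0$ from \eqref{e:stronglim} gives, whenever $f^\infty(x_0+al,t)>0$ for some $l\ge 0$ (i.e.\ $F^\infty(x_0,t)>0$): $g^\infty(x_0+al,t)=0$, hence $f^\infty(x_0+a(l-1),t)=u(x_0+a(l-1),t)-g^\infty(x_0+al,t)>0$, and by downward induction $g^\infty(x_0-aj,t)=0$ for all $j\ge 0$, i.e.\ $G^\infty(x_0,t)=0$. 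This is local on the lattice $x_0+a\Z$, so it works for arbitrarily many sign changes of $\Phi$ and needs no control of the initial overlap. I would recommend replacing both of your routes for the key step by this argument; the rest of your write-up (the telescoping reduction and part (ii)) can stand.
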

\noindent The main ingredient of the proof is:
\begin{lemma}\label{p:identifylimit}
Let $f_I$ and $g_I$ satisfy \ref{a:A}. Then
\begin{enumerate}
\item [(i)] $F^\infty = (F^\infty-G^\infty)^+$ and $G^\infty = (F^\infty-G^\infty)^-$.
\item [(ii)] Additionally let \ref{a:wellprep} hold. Then there exists a unique globally defined continuous function $p = p(t)$ (the price), which satisfies
\begin{align*}
  F^\infty(p(t),t) = G^\infty(p(t),t) = 0 \text{ and } 
\end{align*}
\end{enumerate}
\end{lemma}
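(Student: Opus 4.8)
The plan is to exploit the sign structure of the two heat equations \eqref{e:FG} together with the non-negativity of the transaction measure $\muk = k\fk\gk$. Recall that $F^k_t - F^k_{xx} = -\muk \le 0$ and $G^k_t - G^k_{xx} = -\muk \le 0$, so both $F^k$ and $G^k$ are supersolutions of the heat equation, while their difference $\Phi = F^k - G^k$ solves the heat equation exactly and is independent of $k$. The key observation is that $F^k$ and $G^k$ are non-negative (as sums of the non-negative $\fk$, $\gk$) and, by Proposition~\ref{p:identifylimit}'s hypotheses and the strong convergence \eqref{e:stronglim}, they concentrate their ``overlap'' to zero: since $\fk\gk \to 0$ strongly in $\Loneloc$, in the limit $F^\infty$ and $G^\infty$ cannot both be strictly positive at the same point $(x,t)$ — at least one of them vanishes there. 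To see this rigorously, note $F^\infty G^\infty$ is the limit of $F^k G^k$, and $F^k G^k = \sum_{l,m\ge 0}\fk(x+al)\gk(x-am)$; the diagonal-type terms are controlled by the local $L^1$ bound on $\fk\gk$ after shifting, and the off-diagonal terms require a separate argument (see below). Granting $F^\infty G^\infty = 0$ a.e., and combining with $\Phi = F^\infty - G^\infty$, we get pointwise a.e. that $F^\infty = \max(\Phi,0) = \Phi^+$ and $G^\infty = \max(-\Phi,0) = \Phi^-$, which is part (i).

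For part (i) the cleaner route, which I would actually carry out, avoids the product $F^\infty G^\infty$ and instead argues monotonicity. First I would show $F^\infty \ge \Phi^+$ and $G^\infty \ge \Phi^-$ using that $F^\infty, G^\infty \ge 0$ and $F^\infty - G^\infty = \Phi$ (so $F^\infty = \Phi + G^\infty \ge \Phi$, hence $F^\infty \ge \Phi^+$ since also $F^\infty\ge 0$; symmetrically for $G^\infty$). For the reverse inequalities I would use the equations: $F^\infty$ satisfies $F^\infty_t - F^\infty_{xx} = -\mu^\infty \le 0$ with $\mu^\infty \ge 0$, i.e.\ $F^\infty$ is a non-negative subsolution-type object dominated by the heat flow of its own data $\Phi_I^+ + (\text{correction})$. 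More precisely, $\Phi_I = \sum f_I(x+al) - \sum g_I(x-al)$; by assumption \ref{a:A} (and without \ref{a:wellprep}) one computes $\Phi_I^+ = \sum_{l\ge0} f_I(x+al)$ and $\Phi_I^- = \sum_{l\ge 0} g_I(x-al)$ only when the supports are suitably ordered, so in the general case of (i) one needs the comparison argument directly on $F^\infty$: since $F^\infty \ge 0$, $G^\infty\ge 0$, $\mu^\infty\ge 0$, and the heat semigroup is order preserving, a maximum-principle comparison against $e^{t\partial_{xx}}$ applied to the decomposition $\Phi = \Phi^+ - \Phi^-$ forces $F^\infty \le \Phi^+$ and $G^\infty \le \Phi^-$. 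Combining the two chains of inequalities yields the identities in (i).

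For part (ii), under the additional ordering condition \ref{a:wellprep} the data satisfy $\Phi_I(x) = \sum_{l\ge 0} f_I(x+al) - \sum_{l\ge 0} g_I(x-al)$ with the first sum supported to the left and the second to the right, so $\Phi_I$ is $\ge 0$ on a left half-line, $\le 0$ on a right half-line, and strictly monotone decreasing through a single sign change. Since $\Phi$ solves the heat equation and $\Phi_I$ changes sign exactly once, the nodal set $\{\Phi(\cdot,t)=0\}$ is a single point for each $t>0$ — this is the standard fact that the number of sign changes of a solution of the heat equation is non-increasing, combined with strict positivity/negativity of $\Phi$ on the two half-lines propagating by the strong maximum principle so that exactly one zero persists. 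Define $p(t)$ to be that zero; then $\Phi^+(p(t),t) = \Phi^-(p(t),t) = 0$, and by part (i), $F^\infty(p(t),t) = G^\infty(p(t),t) = 0$. Continuity of $p(t)$ follows from the implicit function theorem applied to $\Phi(p,t)=0$ using $\Phi_x(p(t),t)\ne 0$ (strict sign change, guaranteed by Hopf's lemma / the strong maximum principle for the heat equation away from $t=0$), plus a separate argument at $t\to 0^+$ using the location of the sign change of $\Phi_I$. The main obstacle, I expect, is rigorously upgrading the weak/strong $L^1_{loc}$ convergence of $\fk\gk \to 0$ to the statement $F^\infty G^\infty = 0$ a.e.\ (equivalently, controlling the off-diagonal cross terms $\fk(x+al)\gk(x-am)$ uniformly in $k$), and ensuring the nodal-set-is-a-point argument is valid in the low-regularity setting ($\Phi$ is smooth for $t>0$ since it solves the heat equation, so this is manageable, but the behaviour as $t\to 0^+$ and the uniqueness of $p$ need care).
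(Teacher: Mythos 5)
There is a genuine gap in part (i), and it sits exactly at the point you yourself flag as ``the main obstacle.'' Your preferred route establishes the easy inequalities $F^\infty\ge\Phi^+$, $G^\infty\ge\Phi^-$ correctly, but the reverse inequality via ``a maximum-principle comparison against $e^{t\partial_{xx}}$'' does not work. Since $F^\infty_t-F^\infty_{xx}=-\mu^\infty\le 0$, the function $F^\infty$ is a \emph{subsolution} of the heat equation (not a supersolution, as you write), and the candidate upper barrier $\Phi^+=(e^{t\partial_{xx}}\Phi_I)^+$ is \emph{also} subcaloric (the positive part of a caloric function satisfies $\Phi^+_t-\Phi^+_{xx}\le 0$ distributionally). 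The comparison principle only lets you dominate a subsolution by a supersolution, and the natural supercaloric majorant $e^{t\partial_{xx}}\Phi_I^+$ satisfies $e^{t\partial_{xx}}\Phi_I^+\ge(e^{t\partial_{xx}}\Phi_I)^+=\Phi^+$, i.e.\ the inequality points the wrong way. So no barrier argument of this form yields $F^\infty\le\Phi^+$. Your fallback route --- proving $F^\infty G^\infty=0$ a.e.\ directly --- is the right target (together with $F^\infty,G^\infty\ge 0$ and $F^\infty-G^\infty=\Phi$ it immediately gives (i)), but you explicitly leave the off-diagonal cross terms $\fk(x+al)\gk(x-am)$, $l\neq m$, uncontrolled, and the local $L^1$ bound on $\fk\gk$ says nothing about them.

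The missing idea is the one the paper uses: the conserved combination $u(x,t)=\fk(x,t)+\gk(x+a,t)$ solves the heat equation with non-negative, nontrivial data and is therefore strictly positive for $t>0$. This lets you \emph{cascade} along the lattice $x+a\Z$: if $F^\infty(x,t)>0$ then $f^\infty(x+al,t)>0$ for some $l$, hence $g^\infty(x+al,t)=0$ by the strong limit $f^\infty g^\infty=0$; then $f^\infty(x+a(l-1),t)=u(x+a(l-1),t)-g^\infty(x+al,t)>0$, hence $g^\infty(x+a(l-1),t)=0$, and inductively $g^\infty(x-al,t)=0$ for all $l\ge 0$, i.e.\ $G^\infty(x,t)=0$. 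This is precisely the mechanism that controls the off-diagonal terms you could not handle, and without it (or an equivalent substitute) part (i) is not proved. Part (ii) of your argument is fine in outline: the existence, uniqueness and continuity of the zero $p(t)$ of $\Phi(\cdot,t)$ is exactly what the paper imports from the earlier free-boundary literature, and once (i) is available, $\Phi(p(t),t)=0$ gives $F^\infty(p(t),t)=G^\infty(p(t),t)=0$ directly.
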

\begin{proof}
(i) Take $(x,t) \in \R \times (0,\infty)$  such that $F^\infty(\cdot,t), G^\infty(\cdot,t)$ are continuous and $F^\infty(x,t) > 0$. Then there exists an $l$ such that $f^\infty(x + al,t) > 0$ and therefore $g^\infty(x + al, t) = 0$, due to \eqref{e:stronglim}.
The function $u(x,t) = \fk(x,t) + \gk(x+a,t)$ satisfies the heat equation \eqref{e:u} and is positive, hence we can deduce that $f^\infty(x-a(l-1), t) > 0$ and subsequently that $g^\infty(x + a(l-1),t)=0$.
By repeating this argument we can show that $g^\infty(x-al,t) = 0$ for $l=0,1,2, \ldots$, which implies $G^\infty(x,t) = 0$. Analogously we prove $G^\infty(x,t) > 0$ implies $F^\infty(x,t)$ = 0.\\
(ii) From the results in the work by \cite{CMP, CMW} we deduce that there exists a unique globally defined continuous function $p = p(t)$ (the price) which satisfies
\begin{align}
\Phi(p(t),t) = 0 = F^{\infty}(p(t),t) - G^\infty(p(t) ,t).
\end{align}
Therefore $F^\infty(p(t),t) = G^\infty(p(t),t)$. Assuming that $F^\infty(p(t),t) > 0$ leads to a contradiction by proceeding similarly to the proof of (i). 
\end{proof}

\noindent This allows us to identify the limiting functions $f^\infty$ and $g^\infty$ in the case of (ii) of Theorem \ref{t:id}  and conclude, as in \cite{CMP, CMW} that 
$f^\infty$ and $g^\infty$ satisfy system \eqref{e:lasrylions}. \\
 We remark that the theory developed in \cite{CMP, CMW} applies strictly speaking only if $\sup \lbrace f_I(x) > 0 \rbrace = \inf \lbrace g_I(x) > 0 \rbrace$, however an extension
for $\sup \lbrace f_I(x) > 0 \rbrace \leq \inf \lbrace g_I(x) > 0 \rbrace$ is straight forward. 

\begin{rem}
\noindent In the case (ii) the transaction volume in the limit $k \rightarrow \infty$ is 
\begin{align}\label{e:muinf}
\mu^\infty (x,t) = \lambda(t) \delta(x-p(t))
\end{align}
and the limited density of traded prices is $\rho^\infty (x,t) = \delta (x-p(t))$. To see this we compare \eqref{e:difflimit} to \eqref{e:lasrylionsdiff} and obtain:
\begin{align*}
\mu^\infty (x+a,t) - \mu^\infty(x-a,t) = \lambda (t) (\delta(x-p(t) +a) - \delta(x-p(t) -a).
\end{align*}
This implies that $\mu^\infty (x,t) = \lambda(t) \delta (x-p(t)) + A(t, x)$,
where $A$ is a nonnegative and $2a$-periodic function. Now we expand $A$ into its Fourier series and consider a single
harmonic term in the series, given by $a_l(t) e^{ i \pi x \frac{l}{a}}, l \in \mathbb{Z}$. When taking this term as inhomogeneity in the heat equation
\begin{align*}
&z_t = z_{xx} - a_l(t) e^{ i \pi x \frac{l}{a}}\\
&z(x,t=0) = 0,
\end{align*}
we easily compute the solution:
\begin{align*}
z(x,t) &= - e^{i \pi x \frac{l}{a}} \int_0^t e^{-(\frac{\pi l}{a})^2 (t-s)} a_l(s) ds.
\end{align*}
Passing to the limit k to infinity in \eqref{e:FG}(a) gives the heat equation for $F^\infty$ with $-\mu^\infty$ as inhomogeneity. From the proof of Proposition \ref{p:FG} 
it becomes clear that $F^\infty$ tends to $0$ for $x \rightarrow \infty$ (for every fixed t) and thus $F^\infty$ does not admit $x$-periodic modes. We conclude \eqref{e:muinf}.
\end{rem}

If assumption \ref{a:wellprep} does not hold, then it can be shown that the local transaction rate satisfies in the limit $k \rightarrow \infty$:
\begin{align*}
\mu^\infty(x,t) = \sum_{j \in J(t)} \lvert \Phi_x(p_j(t),t) \rvert \delta (x-p_j(t)),
\end{align*}
where $J(t)$ denotes the index set (finite or countably finite) such that $\lbrace p_j(\cdot,t) \mid j \in J(t) \rbrace$  is the set of zeros of $\Phi(\cdot,t)$.

\section{The initial layer problem - preparation of the initial data}\label{s:initlayer}

In this section we discuss the behaviour of the initial layer on the fast time scale. This initial layer occurs if $(f^\infty(x, t=0), g^\infty (x, t=0))$ as computed in Theorem \ref{t:id} (i) 
differs from $(f_I, g_I)$. \\
Let $\varepsilon = \frac{1}{k}$, then system \eqref{e:boltzprice} reads
\begin{align*}
  \varepsilon \feps_t(x,t) &= - \feps(x,t) \geps(x,t) + \feps(x+a,t) \geps(x+a,t) + \varepsilon \feps_{xx}(x,t)\\
\varepsilon \geps_t(x,t) &= -\feps(x,t) \geps(x,t) + \feps(x-a,t) \geps(x-a,t) + \varepsilon \geps_{xx}(x,t).
\end{align*}
Let $\tau = \frac{t}{\varepsilon}$ denote the fast time scale and the corresponding fast-scale dependent variables by 
\begin{align*}
\aeps(x,\tau) := \feps(x,t) \text{ and } \beps(x,\tau) := \geps(x,t).
\end{align*}
Then $\aeps = \aeps(x,\tau)$ and $\beps=\beps(x,\tau)$ satisfy 
\begin{subequations}\label{e:abeps}
\begin{align}
\aeps_\tau(x,\tau) &= - \aeps (x,\tau) \beps(x,\tau) + \beps(x+a,\tau) \aeps(x+a,\tau) + \varepsilon \aeps_{xx}(x,\tau)\label{e:aeps}\\
\beps_\tau(x,\tau) &= -\beps(x,\tau) \aeps(x,\tau) + \beps(x-a,\tau) \aeps(x-a,\tau) + \varepsilon \beps_{xx}(x,\tau)\label{e:beps}.
\end{align}
with initial data 
\begin{align}\label{e:abepsinit}
\aeps(x,0) &= f_I(x) \text{ and } \beps(x,0) = g_I(x).
\end{align}
\end{subequations}

\begin{prop}
In the limit $\varepsilon \rightarrow 0$ the fast scale variables $\aeps = \aeps(x,\tau)$ and $\beps = \beps(x,\tau)$ converge to
\begin{align*}
\aeps \rightarrow \alpha^0, ~ \beps \rightarrow \beta^0 \text { in } \Ltwoloc(0,\infty; \Ltwoloc(\R)).
\end{align*} 
The limits $\alpha^0 = \alpha^0(x,\tau)$ and $\beta^0 = \beta^0(x,\tau)$ satisfy the ODE system
\begin{subequations}\label{e:priceode}
\begin{align}
\alpha^0_\tau(x,\tau) &= - \alpha^0(x,\tau) \beta^0(x,\tau) + \alpha^0(x+a,\tau) \beta^0(x+a,\tau)\\
\beta^0_\tau(x,\tau) &= -\alpha^0(x,\tau) \beta^0(x,\tau) + \alpha^0(x-a,\tau) \beta^0(x-a,\tau).
\end{align}
\end{subequations}
Furthermore the system \eqref{e:priceode} with IC \eqref{e:abepsinit} has a unique continuous space-time solution.
\end{prop}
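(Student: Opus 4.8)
The plan is to prove the three assertions in the order (a) uniform-in-$\varepsilon$ a priori bounds, (b) well-posedness of the limiting system \eqref{e:priceode}--\eqref{e:abepsinit}, (c) convergence, deducing en route that the limit solves \eqref{e:priceode}.

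\emph{Uniform bounds.} Exactly as in the first Proposition, $u^\varepsilon(x,\tau):=\aeps(x,\tau)+\beps(x+a,\tau)$ solves $u^\varepsilon_\tau=\varepsilon u^\varepsilon_{xx}$ with the $\varepsilon$-independent datum $f_I(x)+g_I(x+a)$, and $v^\varepsilon(x,\tau):=\beps(x,\tau)+\aeps(x-a,\tau)$ solves $v^\varepsilon_\tau=\varepsilon v^\varepsilon_{xx}$ with datum $g_I(x)+f_I(x-a)$. Since $\aeps,\beps\ge 0$ (inherited from \eqref{e:boltzprice}), the maximum principle gives $0\le\aeps\le\|f_I+g_I(\cdot+a)\|_{L^\infty}$ and $0\le\beps\le\|g_I+f_I(\cdot-a)\|_{L^\infty}$ uniformly in $\varepsilon$ and $\tau$, with the same fast spatial decay as the data; conservation of $\|\aeps(\cdot,\tau)\|_{L^1},\|\beps(\cdot,\tau)\|_{L^1}$ then bounds $\aeps\beps$ in $L^\infty(0,\infty;L^1(\R)\cap L^\infty(\R))$ uniformly. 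Multiplying \eqref{e:aeps} by $\aeps$, \eqref{e:beps} by $\beps$ and integrating in $x$, the cross terms are controlled by $\|\aeps\|_{L^\infty}\int\aeps\beps\,dx$ and its analogue, which gives the vanishing-viscosity dissipation estimate $\varepsilon\int_0^T\!\int_\R((\aeps_x)^2+(\beps_x)^2)\,dx\,d\tau\le C(T)$ uniformly in $\varepsilon$; in particular $\|\sqrt{\varepsilon}\,\aeps_x\|_{L^2(0,T;L^2)}$ and $\|\sqrt{\varepsilon}\,\beps_x\|_{L^2(0,T;L^2)}$ are bounded.

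\emph{The limiting ODE.} I would regard \eqref{e:priceode}--\eqref{e:abepsinit} as an ODE $\dot w=\Phi(w)$ in $w\in X:=H^1(\R)\times H^1(\R)$, with $\Phi(\alpha,\beta)=(-\alpha\beta+(\alpha\beta)(\cdot+a),\,-\alpha\beta+(\alpha\beta)(\cdot-a))$. Because $H^1(\R)\hookrightarrow L^\infty(\R)$ in one dimension, $(\alpha,\beta)\mapsto\alpha\beta$ is locally Lipschitz from $X$ into $H^1(\R)$ and the shifts act isometrically on $H^1(\R)$, so $\Phi$ is locally Lipschitz and Picard--Lindel\"of yields a unique maximal solution $(\anull,\bnull)\in C([0,T_{\max});X)$ with datum $(f_I,g_I)\in\mathcal S(\R)^2\subset X$. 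Non-negativity of $\anull,\bnull$ follows from the Duhamel identity $\anull(x,\tau)=f_I(x)e^{-\int_0^\tau\bnull(x,s)ds}+\int_0^\tau e^{-\int_s^\tau\bnull(x,r)dr}(\anull\bnull)(x+a,s)\,ds$, its companion for $\bnull$, and a fixed-point iteration that stays in the (invariant) non-negative cone, together with uniqueness. As in the first step, \eqref{e:priceode} now makes $\anull(x,\tau)+\bnull(x+a,\tau)$ and $\bnull(x,\tau)+\anull(x-a,\tau)$ $\tau$-independent, so $0\le\anull\le f_I+g_I(\cdot+a)$ and $0\le\bnull\le g_I+f_I(\cdot-a)$ uniformly in $\tau$; this bounds $\anull,\bnull$ in $L^1\cap L^\infty$, and differentiating \eqref{e:priceode} in $x$, testing with $\anull_x,\bnull_x$ and using Gronwall bounds $\|\anull_x(\cdot,\tau)\|_{L^2}+\|\bnull_x(\cdot,\tau)\|_{L^2}$ on every finite interval. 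Hence $T_{\max}=\infty$, and $\anull,\bnull\in C([0,\infty);H^1(\R))\hookrightarrow C([0,\infty)\times\R)$ are the unique (jointly continuous, decaying) solution.

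\emph{Convergence and identification.} Put $d^\varepsilon:=\aeps-\anull$, $e^\varepsilon:=\beps-\bnull$, so $d^\varepsilon(\cdot,0)=e^\varepsilon(\cdot,0)=0$ and $d^\varepsilon_\tau=-(\aeps\beps-\anull\bnull)+(\aeps\beps-\anull\bnull)(\cdot+a)+\varepsilon\aeps_{xx}$, with the analogous equation for $e^\varepsilon$. Testing with $d^\varepsilon,e^\varepsilon$, integrating over $\R$ and summing, the diffusion term contributes $\varepsilon\int\aeps_{xx}d^\varepsilon\,dx=-\varepsilon\|\aeps_x\|_{L^2}^2+\varepsilon\int\aeps_x\anull_x\,dx$; I drop the first (non-positive) part and, after integrating in $\tau$ over $(0,T)$, bound the second by $\sqrt{\varepsilon}\,\|\sqrt{\varepsilon}\,\aeps_x\|_{L^2(0,T;L^2)}\|\anull_x\|_{L^2(0,T;L^2)}\le C(T)\sqrt{\varepsilon}$ using the two previous steps. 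Writing $\aeps\beps-\anull\bnull=\aeps e^\varepsilon+\bnull d^\varepsilon$ and using the uniform $L^\infty$ bounds on $\aeps,\bnull$, the product terms (and their shifts, of equal $L^2$ norm) are controlled by $C(\|d^\varepsilon\|_{L^2}^2+\|e^\varepsilon\|_{L^2}^2)$. Gronwall then gives $\sup_{[0,T]}(\|d^\varepsilon(\tau)\|_{L^2}^2+\|e^\varepsilon(\tau)\|_{L^2}^2)\le C(T)\sqrt{\varepsilon}\to 0$, i.e. $\aeps\to\anull$, $\beps\to\bnull$ in $C([0,T];L^2(\R))$ for every $T$, in particular in $\Ltwoloc(0,\infty;\Ltwoloc(\R))$; since the limit is the unique solution of \eqref{e:priceode}--\eqref{e:abepsinit}, the limiting functions satisfy that system.

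\emph{Main obstacle.} The essential difficulty is that the only spatial regularity available for $\aeps$ is the $O(\varepsilon^{-1/2})$ bound on $\aeps_x$, so the Aubin--Lions compactness used for the $k\to\infty$ limit is unavailable here. The plan circumvents this by solving the limit problem independently --- it genuinely is an ODE, with $H^1$-Lipschitz right-hand side, so no compactness is needed --- and then proving convergence by a quantitative stability estimate in which the dangerous term $\varepsilon\aeps_{xx}$ is absorbed through the favourable sign of $-\varepsilon\|\aeps_x\|_{L^2}^2$ and the pairing against $\anull_x\in L^2$, which is exactly why the limit must first be shown to lie in $H^1$. The remaining points --- justifying the energy identities for fixed $\varepsilon>0$ (immediate from parabolic smoothing, as the data are Schwartz) and the non-negativity of the infinite coupled ODE system --- are routine.
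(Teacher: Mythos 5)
Your argument is correct, but it follows a genuinely different route from the paper. The paper obtains a uniform-in-$\varepsilon$ bound on $\aeps,\beps$ in $\Honeloc(\R\times(0,\infty))$ by first differentiating \eqref{e:aeps}--\eqref{e:beps} in $x$ (and in $\tau$) and then testing with $\aeps_x,\beps_x$ (resp.\ $\aeps_\tau,\beps_\tau$): after differentiation the collision terms only produce first derivatives, controlled by the $L^\infty$ bounds, while the diffusion term contributes $-\varepsilon\int(\aeps_{xx})^2\le 0$ and can be dropped, so Gronwall yields $\|\aeps_x(\cdot,\tau)\|_{L^2}+\|\beps_x(\cdot,\tau)\|_{L^2}\le K_1(T)$ \emph{uniformly in} $\varepsilon$. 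Compactness then extracts a strongly convergent subsequence whose limit is identified with the ODE system by passing to the limit in the weak formulation. Your ``main obstacle'' remark therefore slightly mischaracterises the situation --- a uniform $H^1$ bound, hence Aubin--Lions compactness, is in fact available; only the basic energy estimate is limited to the $O(\varepsilon^{-1/2})$ gradient bound. That said, your alternative is sound and arguably yields more: you construct the limit $(\anull,\bnull)$ independently by Picard--Lindel\"of in $H^1(\R)\times H^1(\R)$ (using the Banach-algebra property of $H^1$ in one dimension, which simultaneously settles the ``unique continuous space-time solution'' claim that the paper dismisses as immediate), and then prove a quantitative relative stability estimate in which $\varepsilon\int\aeps_{xx}\,d^\varepsilon\,dx$ is split into the sign-definite part $-\varepsilon\|\aeps_x\|_{L^2}^2$ and a cross term of size $O(\sqrt{\varepsilon})$ controlled by the weighted dissipation bound and $\anull_x\in L^2_{loc}(0,\infty;L^2(\R))$. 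This delivers convergence of the \emph{whole} family (no subsequence extraction) in $C([0,T];L^2(\R))$ with an explicit rate $O(\varepsilon^{1/4})$, at the price of having to establish well-posedness, global existence, non-negativity and $H^1$ propagation for the limit system up front; the paper's compactness route is shorter on the limit system but gives only subsequential, rate-free convergence until uniqueness of the limit is invoked. Both arguments rest on the same two structural facts: the conserved quantity $\aeps(x,\tau)+\beps(x+a,\tau)$ giving uniform $L^1\cap L^\infty$ control, and the good sign of the viscous term.
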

\begin{proof}
Note that $\veps(x,\tau) = \aeps(x,\tau) + \beps(x+a,\tau)$ satisfies the heat equation
\begin{subequations} \label{e:v}
\begin{align}
\veps_\tau(x,\tau) & = \varepsilon \veps_{xx}(x,\tau)\label{e:veps} \\
\veps(x,0) &= f_I(x) + g_I(x+a).
\end{align}
\end{subequations}
Solutions of \eqref{e:veps} with initial datum $f_I(x),~g_I(x) \in \mathcal{S}(\R)$ decay algebraically fast and are smooth, and we deduce that $\aeps, ~\beps \in L^\infty(0,\infty;  L^1(\R) \cap L^\infty(\R))$
 uniformly as $\varepsilon \rightarrow 0$. Next we differentiate equation \eqref{e:aeps} with respect to $x$, multiply with $\aeps_x$ and integrate over $\R$ to obtain
\begin{align*}
\frac{1}{2}\frac{d}{d\tau} \int_\R (\aeps_x)^2 dx \leq K \int_\R (\aeps_x)^2 dx + K \int_\R (\beps_x)^2 dx - \varepsilon \int_\R (\aeps_{xx})^2 dx.
\end{align*}
The same holds for \eqref{e:beps}, i.e. 
\begin{align*}
\frac{1}{2}\frac{d}{d\tau} \int_\R (\beps_x)^2 dx \leq K \int_\R (\aeps_x)^2 dx + K \int_\R (\beps_x)^2 dx - \varepsilon \int_\R (\beps_{xx})^2 dx.
\end{align*}
Integration over $(0,T)$ gives, for arbitrary $T > 0$:
\begin{align*}
\int_{\R} [(\aeps_x)^2(x,\tau) + (\beps_x)^2(x,\tau)] dx \leq K_1(T),
\end{align*}
for $\tau \in (0,T]$. Similar calculations, i.e. differentiating equations \eqref{e:aeps} and \eqref{e:beps} with respect to $\tau$ and multiplication by $\aeps_\tau$ and $\beps_\tau$ respectively, lead to
\begin{align*}
\int_\R [(\aeps_\tau)^2(x,\tau) + (\beps_\tau)^2(x,\tau)] \leq K_2(T),
\end{align*}
for $\tau \in (0,T)$. Therefore $\aeps$ and $\beps$ are bounded in $\Honeloc(\R \times (0,\infty))$ uniformly as  $\varepsilon \rightarrow 0$. Due to the compact embedding of $\Honeloc$ into $\Ltwoloc$ we can deduce that,
after extraction of a subsequence 
\begin{align*}
\aeps \rightarrow \alpha^0,~\beps \rightarrow \beta^0 \text{ in }\Ltwoloc(0,\infty; \Ltwoloc(\R)).
\end{align*}
The existence of a unique continuous space-time solution of \eqref{e:priceode}, given by $(\anull, \bnull)$, is immediate.
\end{proof}

\noindent Next we study the behaviour of $\alpha^0 = \alpha^0(x,\tau)$ and $\beta^0 = \beta^0(x,\tau)$ as $\tau \rightarrow \infty$, which corresponds to the ``end of the initial layer''.
\begin{prop}
\begin{enumerate}[label=(\textit{\alph*})]
\item Solutions $\anull = \anull(x,\tau)$ and $\bnull = \bnull(x,\tau)$ of \eqref{e:priceode} converge in the limit $\tau \rightarrow \infty$
\begin{align*}
  \lim_{\tau \rightarrow \infty} \anull(x,\tau) = \ainfty(x),~~\lim_{\tau \rightarrow \infty} \bnull(x,\tau) = \binfty(x) \text{ and } \ainfty(x) \binfty(x) \equiv 0 \text{ on } \R.
\end{align*}
\item If $f_I(x) g_I(x) \equiv 0$ on $\R$ then $\anull(x,t) \equiv f_I(x)$ and $\bnull(x,t) \equiv g_I(x)$ on $\R \times [0,\infty)$.
\end{enumerate}
\end{prop}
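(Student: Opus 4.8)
\textbf{Part (b)} is immediate. If $f_I g_I \equiv 0$ on $\R$, then with the constant-in-$\tau$ ansatz $(\anull,\bnull)\equiv(f_I,g_I)$ every product occurring on the right-hand sides of \eqref{e:priceode}, namely $f_I(x)g_I(x)$ and $f_I(x\pm a)g_I(x\pm a)$, vanishes identically; hence $(f_I,g_I)$ solves \eqref{e:priceode} with data \eqref{e:abepsinit}, and by the uniqueness of the continuous space-time solution proved in the previous proposition it \emph{is} the solution. For part (a), I would first record the uniform-in-$\tau$ pointwise bounds
\begin{align*}
0\le \anull(x,\tau)\le f_I(x)+g_I(x+a),\qquad 0\le \bnull(x,\tau)\le f_I(x-a)+g_I(x).
\end{align*}
Indeed, shifting $x\mapsto x+a$ in the equation for $\bnull$ and adding it to the equation for $\anull$ shows $\partial_\tau\big(\anull(x,\tau)+\bnull(x+a,\tau)\big)=0$, so this sum equals its initial value $f_I(x)+g_I(x+a)$ for all $\tau$; nonnegativity of $\anull,\bnull$ then gives the bounds, and in particular $\anull(\cdot,\tau)$, $\bnull(\cdot,\tau)$ decay — uniformly in $\tau$ — as fast as the Schwartz data.

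Next, as in \eqref{e:defFG}, I would introduce
\begin{align*}
A(x,\tau)=\sum_{l=0}^\infty \anull(x+al,\tau),\qquad B(x,\tau)=\sum_{l=0}^\infty \bnull(x-al,\tau),
\end{align*}
which are well defined and converge locally uniformly by the estimate of Proposition \ref{p:FG} (it applies verbatim thanks to the decay just obtained). Differentiating termwise — legitimate because $|\partial_\tau \anull(x+al,\tau)|$ is dominated by the summable, $\tau$-uniform majorant coming from $\anull\bnull\le\big(f_I(\cdot)+g_I(\cdot+a)\big)\big(f_I(\cdot-a)+g_I(\cdot)\big)$ — the series telescopes, the boundary term at $+\infty$ vanishes by decay, and we obtain the fast-scale analogue of \eqref{e:FG},
\begin{align*}
A_\tau(x,\tau)=-\anull(x,\tau)\bnull(x,\tau)\le 0,\qquad B_\tau(x,\tau)=-\anull(x,\tau)\bnull(x,\tau)\le 0.
\end{align*}
Hence $A(x,\cdot)$ and $B(x,\cdot)$ are nonincreasing and bounded below by $0$, so they converge pointwise as $\tau\to\infty$ to limits $A^\infty(x),B^\infty(x)\ge 0$. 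Since $\anull(x,\tau)=A(x,\tau)-A(x+a,\tau)$ and $\bnull(x,\tau)=B(x,\tau)-B(x-a,\tau)$, the limits $\ainfty(x):=A^\infty(x)-A^\infty(x+a)$ and $\binfty(x):=B^\infty(x)-B^\infty(x-a)$ exist and are nonnegative, being pointwise limits of $\anull,\bnull\ge0$. Finally, integrating $A_\tau=-\anull\bnull$ in $\tau$ gives $\int_0^\tau \anull(x,s)\bnull(x,s)\,ds=A(x,0)-A(x,\tau)\le A(x,0)<\infty$ for every $\tau$, so $\int_0^\infty \anull(x,s)\bnull(x,s)\,ds<\infty$; the integrand is nonnegative and converges to $\ainfty(x)\binfty(x)$ as $s\to\infty$, which forces $\ainfty(x)\binfty(x)\equiv 0$ on $\R$.

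The only point requiring genuine care is the termwise differentiation and telescoping that produce the monotone quantities $A$ and $B$ — that is, interchanging $\partial_\tau$ with the infinite series and verifying $\lim_{L\to\infty}\anull(x+aL,\tau)\bnull(x+aL,\tau)=0$ — both of which are controlled by the $\tau$-uniform Schwartz-type decay established in the first step, exactly as in the proof of Proposition \ref{p:FG}. Everything else is soft: monotone convergence plus the one-line integrability argument at the end. (The convergence $\anull(x,\tau)\to\ainfty(x)$ obtained here is pointwise in $x$; locally uniform convergence, if desired, would follow from the $x$-equicontinuity already exploited in the previous proposition, but is not needed for the stated conclusion.)
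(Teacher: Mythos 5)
Your proof is correct and follows essentially the same route as the paper: the conserved sum $\anull(x,\tau)+\bnull(x+a,\tau)$, the telescoped monotone quantities $A,B$ with $A_\tau=B_\tau=-\anull\bnull\le 0$, the resulting integrability of $\anull\bnull$ in $\tau$, and the conclusion that the nonnegative limit of an $L^1$ integrand must vanish. The only (harmless) variations are that you obtain part (b) by direct substitution plus uniqueness rather than via the evolution equation for $w^0=\anull\bnull$, and you extract convergence of $\anull$ from $A(x,\tau)-A(x+a,\tau)$ rather than by integrating $\anull_\tau$; you are also more explicit than the paper about justifying the termwise differentiation.
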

\begin{proof}
Set $w^0 = \anull(x,\tau) \bnull(x,\tau)$ and compute
\begin{align*}
&w_\tau^0(x,\tau) = \anull(x,\tau) w^0(x-a,\tau) - [\anull(x,\tau) + \bnull(x,\tau)] w^0(x,\tau) + \bnull(x,\tau) w^0(x+a,\tau)\\
&w^0(x,0) = \anull(x,0) \bnull(x,0) = f_I(x) g_I(x).
\end{align*}
If $f_I(x) g_I(x) \equiv 0$ on $\R$, then $w^0(x,\tau) = \anull(x,\tau) \bnull(x,\tau) \equiv 0$. Hence \eqref{e:priceode} gives $\anull(x,\tau) = f_I(x)$ and $\bnull(x,\tau) = g_I(x)$ on $\R \times [0,\infty)$.\\
We observe that
\begin{align*}
\anull(x,\tau) + \bnull(x+a,\tau) \equiv f_I(x) + g_I(x+a)
\end{align*}
and 
\begin{align*}
  \underbrace{\sum_{l=0}^\infty \anull(x+al,\tau)}_{=: A^0(x,\tau)} - \underbrace{\sum_{l=0}^\infty \bnull(x-al,\tau)}_{=: B^0(x,\tau)} = \sum_{l=0}^\infty f_I(x+al,\tau) - \sum_{l=0}^\infty g_I(x-al,\tau).
\end{align*}
Note that $\anull(\cdot,\tau), \bnull(\cdot,\tau) \in \mathcal{S}^+(\R)$, thus both sequences $A^0(\cdot,\tau)$ and $B^0(\cdot,\tau)$ converge uniformly locally in $x$. Moreover both sequences are decreasing
in time, since
\begin{align}\label{e:A0}
A^0_\tau = -w^0 \leq 0 \textrm{ and } B^0_\tau = -w^0 \leq 0.
\end{align}
Therefore for every $x \in \R$ fixed, we have 
\begin{align*}
 A^0(x,\tau) \downarrow A^\infty(x) \text{ and } B^0(x,\tau) \downarrow B^\infty(x) \text{ as } \tau \rightarrow \infty.
\end{align*}
Integration of \eqref{e:A0} in time gives
\begin{align*}
A^0(x,\tau) - A^0(x,0) = \int_0^\tau A^0_\tau(x,s) ds = - \int_0^\tau w^0(x,s) ds.
\end{align*}
Therefore we conclude from the existence of the limits $A^\infty(x)$ and $B^\infty(x)$  for $\tau \rightarrow \infty$ that
\begin{align*}
  w^0(\cdot,\tau) \in L^1_+(0,\infty) \text{ for every } x \in \R.
\end{align*}
Then $\anull_\tau(x,\tau) = -w^0(x,\tau) + w^0(x+a,\tau)$ and integration over the interval $[0,\tau]$ gives
\begin{align*}
\anull(x,\tau) - f_I(x) &= - \int_0^\tau w^0(x,s) ds + \int_0^\tau w^0(x+a,s) ds.
\end{align*}
We know that $w^0(x,\cdot) \in L^1_+(0,\infty)$ and therefore we conclude for $\tau \rightarrow \infty$ that
\begin{align*}
\lim_{\tau \rightarrow \infty} \anull(x,\tau) = \ainfty(x), ~\lim_{\tau \rightarrow \infty} \bnull(x,\tau) = \binfty(x) \text{ and } \ainfty(x) \binfty(x) = w^\infty(x).
\end{align*}
Thus $\anull_\tau(x,\tau) \xrightarrow{\tau \rightarrow \infty} -w^\infty(x) + w^\infty(x+a) = 0$, and we conclude that $\ainfty(x) \binfty(x) \equiv 0$ on $\R$.
\end{proof}
\noindent The properties of $\ainfty$ and $\binfty$ can be can be summarised as follows:
\begin{enumerate}
\item[(P1)]\label{i:propone} $\ainfty(x) + \binfty(x+a) = f_I(x) + g_I(x+a)$.
\item[(P2)] $\displaystyle \underbrace{\sum_{l=0}^\infty \ainfty(x+al)}_{=: A^\infty(x)} - \underbrace{\sum_{l=0}^\infty \binfty(x-al)}_{=: B^\infty(x)} = \sum_{l=0}^\infty f_I(x+al) - \sum_{l=0}^\infty g_I(x-al)$.
\item[(P3)]\label{i:propthree} $\ainfty(x) \binfty(x) = 0$.
\item[(P4)]\label{i:propfour} $ \ainfty(x) \geq 0$, $\binfty(x) \geq 0$ on $\R$.
\end{enumerate}
Furthermore we set $C^\infty(x) = A^\infty(x) - B^\infty(x)$.
\begin{theorem}\label{t:fasttime}
Let $f_I, g_I$ satisfy \ref{a:A} and $h(x) := f_I(x) + g_I(x+a)$, $x\in\R$ satisfy
\begin{enumerate}
\item[(i)]  if for some $x_1 \in \R:~h(x_1) > 0$ and if there is an $l_1 \in \N$ with $h_I(x_1-a l_1) = 0$, then
$h_I(x_1 - a (l_1+1)) = 0$,
\item[(ii)] if for some $x_2 \in \R: ~ h(x_2) > 0$ and if there is an $l_2 \in \N$ with $h_I(x_2 + a l_2) = 0$, then 
$h_I(x_2 + a (l_2+1)) = 0$.
\end{enumerate}
Set $\Phi_I(x):= \sum_{l=0}^\infty f_I(x+al)-\sum_{l=0}^\infty g_I(x-al)$. Then the limiting functions $\ainfty(x) = \lim_{\tau \rightarrow \infty} \anull(x,\tau)$ and $\binfty(x) = \lim_{\tau \rightarrow \infty} \bnull(x,\tau)$ can be
identified as:
\begin{align}
\ainfty(x) = \Phi_I^+(x) - \Phi_I^+(x+a) \text{ and }\binfty(x) = \Phi_I^-(x) - \Phi_I^-(x-a).
\end{align}
\end{theorem}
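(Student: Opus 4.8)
The plan is to mimic the identification carried out in Lemma~\ref{p:identifylimit}(i) and Theorem~\ref{t:id}(i), but now on the fast time scale, where the diffusion has disappeared in the limiting system \eqref{e:priceode}; the role played there by the strong maximum principle for the heat operator is taken over here by the structural hypotheses (i)--(ii) on $h$. Throughout I use the four properties (P1)--(P4) of $\ainfty$ and $\binfty$ established above, the summed quantities $A^\infty$, $B^\infty$ and $C^\infty=A^\infty-B^\infty$, and the elementary telescoping identities $\ainfty(x)=A^\infty(x)-A^\infty(x+a)$ and $\binfty(x)=B^\infty(x)-B^\infty(x-a)$, which follow by reindexing the (locally uniformly convergent) defining series. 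By (P2) we already know $C^\infty=\Phi_I$, and by (P4), $A^\infty,B^\infty\geq 0$; so the theorem will follow once I show
\begin{align*}
A^\infty(x)\,B^\infty(x)=0\qquad\text{for every }x\in\R .
\end{align*}
Indeed, at each $x$ this leaves three cases: $\Phi_I(x)>0$ forces $A^\infty(x)>0$, hence $B^\infty(x)=0$ and $A^\infty(x)=\Phi_I^+(x)$; $\Phi_I(x)<0$ is the mirror case; and $\Phi_I(x)=0$ forces $A^\infty(x)=B^\infty(x)$, hence both vanish. Thus $A^\infty=\Phi_I^+$, $B^\infty=\Phi_I^-$, and the two telescoping identities give exactly the claimed formulas for $\ainfty$ and $\binfty$.

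To prove $A^\infty B^\infty\equiv 0$ I first note the structural consequence of (i)--(ii): along any arithmetic progression $\{x_0+al:l\in\Z\}$ the set of indices with $h(x_0+al)>0$ is convex in $\Z$, since (i) says that to the left of a positivity index, once $h$ vanishes it stays zero, and (ii) is the mirror statement. Now suppose $A^\infty(x_0)>0$. Then $\ainfty(x_0+al_0)>0$ for some $l_0\geq 0$, so $h(x_0+al_0)>0$ by (P1) and $\binfty(x_0+al_0)=0$ by (P3). Using (P1) in the form $\ainfty(y)=h(y)-\binfty(y+a)$ I propagate leftward: while the index stays in the positivity interval of $h$, knowing $\binfty(x_0+al)=0$ yields $\ainfty(x_0+a(l-1))=h(x_0+a(l-1))>0$, hence $\binfty(x_0+a(l-1))=0$ by (P3); once the index leaves that interval, convexity gives $h(x_0+am)=0$ for all smaller $m$, and then (P1) together with (P4) forces $\ainfty(x_0+am)=\binfty(x_0+a(m+1))=0$ there as well. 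In either case $\binfty(x_0+al)=0$ for all $l\leq l_0$, in particular for all $l\leq 0$, so $B^\infty(x_0)=\sum_{l\geq 0}\binfty(x_0-al)=0$. The symmetric argument, starting from $B^\infty(x_0)>0$ and propagating to the right with hypothesis (ii), gives $A^\infty(x_0)=0$. This proves $A^\infty B^\infty\equiv 0$.

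I expect the propagation step to be the main obstacle: one has to make the downward (resp. upward) induction precise and, in particular, treat cleanly the transition index at which $h$ switches from positive to zero, so that $\binfty$ is shown to vanish on the entire left (resp. right) tail of the progression and not merely up to that index. This is precisely where hypotheses (i)--(ii) are indispensable --- without a diffusion term there is no maximum-principle substitute, and one genuinely needs the ``support cannot reappear'' structure of $h$ to close the induction. A small preliminary point is that $\ainfty,\binfty$, and hence $A^\infty,B^\infty$, are genuine continuous functions, so that these pointwise manipulations are legitimate; this follows from the smoothness and algebraic decay of solutions of the heat equation \eqref{e:v}, the locally uniform convergence of the series defining $A^0,B^0$, and the monotone-in-$\tau$ convergence $A^0\downarrow A^\infty$, $B^0\downarrow B^\infty$ established in the preceding proposition.
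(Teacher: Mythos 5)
Your argument is correct and follows essentially the same route as the paper: the core step is the leftward (resp.\ rightward) propagation of the vanishing of $\binfty$ (resp.\ $\ainfty$) along the arithmetic progression, using (P1), (P3), (P4) and hypotheses (i)--(ii) to handle the transition index where $h$ first vanishes, followed by the identification $A^\infty=\Phi_I^+$, $B^\infty=\Phi_I^-$ and the telescoping identities. Your repackaging --- first reducing everything to $A^\infty B^\infty\equiv 0$ and phrasing (i)--(ii) as convexity of the positivity set of $h$ along the progression --- is only a cosmetic reorganisation of the paper's induction in parts (a) and (b).
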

\begin{proof}
The proof proceeds in analogy to the proof of Proposition \ref{p:identifylimit} and is split into two parts. First we show that the positivity of either $\ainfty$ or $\binfty$ at $x \in \R$ implies that the other variable is zero at $x$ and $x \pm a l$ as well. In the second step we
identify the limiting functions.
\begin{enumerate}
\item[(a)] Let $x_0 \in \R$ be such that $\ainfty(x_0) > 0$. Then $\binfty(x_0) = 0$ for all $l \in \N$, because of property (P3). Next we show by induction that $\binfty(x_0 - al) = 0,~\forall l \in \N$.
Therefore let for $l_1 \in \N$ hold $\binfty(x_0-al) = 0$ for all $l \in \lbrace 0,1,2,\ldots, l_1-1\rbrace$. Then we have
\begin{align}\label{e:con1}
\ainfty(x_0 - a l_1) + \underbrace{ \binfty(x_0-a(l_1-1))}_{=0} = h_I(x_0-a l_1)
\end{align}
and 
\begin{align*}
\ainfty(x_0) + \binfty(x_0+a) = h_I(x_0) > 0. 
\end{align*}
We distinguish between two cases:
\begin{enumerate}
\item [Case 1:] If $h_I(x_0 - al_1) > 0$, then $\ainfty(x_0 -al_1) > 0$ (because of \eqref{e:con1}). Hence we deduce, using property (P3), that $\binfty(x_0-a l_1) = 0$.
\item [Case 2:]  If $h_I(x_0-al_1) = 0$, then by assumption (i) of the theorem we have $h(x_0 - a(l_1 + 1)) = 0$. Property (P1) gives $\ainfty(x_0 - a(l_1+1)) + \binfty(x_0 - a l_1) = h_I(x_0-a(l_1+1)) = 0$
and we conclude that $\binfty(x_0 - al_1) = 0$. 
\end{enumerate}
\item[(b)] Let $x_0 \in \R$ with $\binfty(x_0) > 0$. From property (P3) we conclude that $\ainfty(x_0) = 0$. Again we use an induction argument to show that
$\ainfty(x_0+al) = 0$ for all $l\in \N$. Therefore for $l_2 \in \N$ let $\ainfty(x_0+al) = 0$ for all $l \in \lbrace 0,1,\ldots,l_2-1 \rbrace$. In this case
\begin{align}\label{e:con2}
\underbrace{\ainfty(x_0 + a(l_2-1))}_{=0} + \binfty(x_0 + al_2) = h_I(x_0 + a(l_2-1))
\end{align}
and
\begin{align*}
\ainfty(x_0-a) + \binfty(x_0) = h_I(x_0-a) > 0.
\end{align*}
Again we consider the two different cases.
\begin{enumerate}
\item [Case 1:] If $h_I(x+a(l_2-1)) > 0$, then $\binfty(x_0+al_2) > 0$ (because of \eqref{e:con2} ) and therefore $\ainfty(x_0 + al_2) = 0$ follows.
\item [Case 2:] If $h_I(x+a(l_2-1)) = 0$, then by assumption (ii) of the theorem we know that $h_I(x_0 + al_2) = 0$ and therefore $\ainfty(x_ß + al_2) + \binfty(x_0 + a(l_2+1)) = h_I(x+al_2) = 0.$
Thus $\ainfty(x_0 + a l_2) = 0$.
\end{enumerate}
\end{enumerate}
Now we identify the limiting functions $\ainfty = \ainfty(x)$ and $\binfty = \binfty(x)$. We choose an $x_0 \in \R$ such that $A^\infty(x_0) > 0$. Then there
exists $l_2 \in \N \cup \lbrace 0 \rbrace$, such that $\ainfty(x_0 + a l_2) > 0$. From (a) we deduce $\binfty((x_0 + a l_2) - al) = \binfty(x_0 - a (l-l_2)) = 0$ for all $l \in \N$.
Therefore $B^\infty(x_0) = 0$. Analogously $B^\infty(x_1) > 0$ implies $A^\infty(x_1) = 0$. Therefore $A^\infty(x) = C^\infty(x)^+$ and $B^\infty(x) = C^\infty(x)^-$ , i.e.
\begin{align*}
A^\infty(x) = \Phi_I^+(x) \text{ and } B^\infty(x) = \Phi_I^-(x).
\end{align*}
We conclude that
\begin{align*}
\ainfty(x) = \Phi_I^+(x) - \Phi_I^+(x+a) \text{ and } \binfty(x) = \Phi_I^-(x) - \Phi_I^-(x-a).\qedhere
\end{align*}
\end{proof}

Under the assumption of Theorem \ref{t:fasttime} on $f_I$ and $g_I$ we computed the unique solution candidate satisfying properties (P1)-(P4). Since existence of a solution of (P1)-(P4) follows already
from the $\tau \rightarrow \infty$ argument, we conclude that we have indeed calculated $\ainfty, \binfty$.

\noindent Note that Theorem \ref{t:fasttime} does not cover initial functions $f_I$ and $g_I \in C_0^\infty(\R)$, whose supports are separated by a distance larger than $a$ (see Figure \ref{f:initdata} (d)). If the distance between
the supports of $f_I$ and $g_I$ is smaller than $a$ or if they intersect, Theorem \ref{t:fasttime} is applicable. 
\begin{figure}
\begin{center}
\scalebox{0.4}{ \input{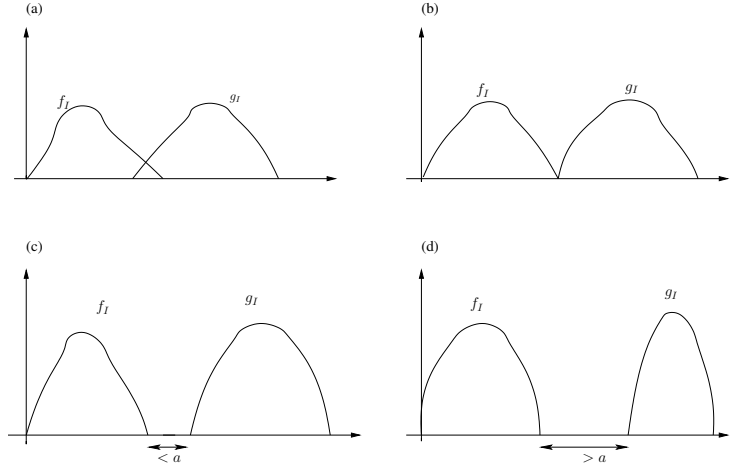}}
\end{center}
\caption{The initial datum depicted in figures (a)-(c) satisfies the assumptions of Theorem \ref{t:fasttime}. This is not the case for the initial datum depicted in Figure (d). }\label{f:initdata}
\end{figure}

\begin{rem}
Note that $\veps(x,\tau) = \aeps(x,\tau) + \beps(x+a,\tau)$ satisfies the heat equation \eqref{e:v}, thus
\begin{align*}
\lim_{\tau \rightarrow \infty} (\aeps, \beps) = 0 \text{ in } L^\infty(\R),
\end{align*} 
and, actually:
\begin{align*}
0 \leq \aeps(x,\tau) + \beps(x,\tau) \leq \frac{1}{\sqrt{4 \pi \varepsilon \tau}} \lVert h_I \rVert_{L^1(\R)} \quad \forall x \in \R, \forall \tau > 0.
\end{align*}
Therefore the limits $\tau \rightarrow \infty$ and $\varepsilon \rightarrow 0$ do not commute. Note that the initial layer problem \eqref{e:abeps} does in all generality not provide well-prepared initial data satisfying
assumption \ref{a:wellprep}. We conjecture that small diffusion helps in his process, i.e. that $\aeps,~\beps$ are close to well-prepared on large time scales $\tau = \tau(\varepsilon)$ with $\varepsilon \tau(\varepsilon)$
small (see Example 3 in Section \ref{s:numsim}).
\end{rem}

\section{Motivation by fast trading with small transaction rates}\label{s:kac}

\noindent In this section we consider the scaling limit $k\rightarrow \infty$, $a \rightarrow 0$ with $ka = c=const$. We rewrite system \eqref{e:boltzprice} as 
\begin{subequations}\label{e:rewrite1}
\begin{align}
f_t(x,t) &= c \frac{(fg)(x+a,t) - (fg)(x,t)}{a} + f_{xx}(x,t)\label{e:flimit} \\
g_t(x,t) &= c \frac{(fg)(x-a,t) - (fg)(x,t)}{a} + g_{xx}(x,t)\label{f:glimit}.
\end{align}
\end{subequations}
Formally it is clear that the limiting system as $k\rightarrow \infty$ and $a \rightarrow 0$ is
\begin{subequations}\label{e:limit}
\begin{align}
f^0_t(x,t) &= c (f^0g^0)_x(x,t) + f^0_{xx}(x,t) \\
g^0_t(x,t) &= -c (f^0g^0)_x(x,t) + g^0_{xx}(x,t).
\end{align}
\end{subequations}
Note that $u^0(x,t) = f^0(x,t) + g^0(x,t)$ satisfies again the heat equation $u^0_t(x,t) = u^0_{xx}(x,t)$ with initial datum $u^0(x,0) = f_I(x) + g_I(x)$.
The rigorous statement for this limit is stated in the following theorem:
\begin{theorem}
Let $k > 0$, $a > 0$, $ka = c$ and $f_I, g_I \in \mathcal{S}^+(\R)$. Then the weak limits $f^0$, $g^0$ satisfy system \eqref{e:limit} subject to $f^0(x,0)=f_I(x)$ and $g^0(x,0) = g_I(x)$.
\end{theorem}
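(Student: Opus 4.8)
The plan is to mimic the scheme of Section~\ref{s:kinfty}: first extract bounds on $f=f^a$ and $g=g^a$ that are uniform in $a$ (equivalently in $k=c/a$), and then pass to the limit in the rescaled system \eqref{e:rewrite1} by compactness. The starting point is again the auxiliary function $u^a(x,t):=f^a(x,t)+g^a(x+a,t)$, which solves the heat equation $u^a_t=u^a_{xx}$ \emph{exactly}, now with datum $f_I(x)+g_I(x+a)$; for $f_I,g_I\in\mathcal{S}^+(\R)$ this datum is bounded in $L^1(\R)\cap L^\infty(\R)$ uniformly in $a$. Combining this with non-negativity of $f^a,g^a$ (so that $0\le f^a(x,t)\le u^a(x,t)$ and $0\le g^a(x,t)\le u^a(x-a,t)$) and conservation of mass, one gets $f^a,g^a\in L^\infty(0,\infty;L^1(\R)\cap L^\infty(\R))$ uniformly as $a\to 0$; in particular $f^ag^a$ is bounded in $L^\infty(0,\infty;L^1(\R)\cap L^\infty(\R))$, hence in $L^\infty(0,\infty;L^2(\R))$.

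The key estimate concerns the singular difference quotient in \eqref{e:rewrite1}. Multiplying \eqref{e:flimit} by $f^a$, integrating in $x$, and shifting variables in the gain term, I would rewrite the collision contribution as
\begin{align*}
\frac{c}{a}\int_\R f^a(x,t)\big[(f^ag^a)(x+a,t)-(f^ag^a)(x,t)\big]\,dx = -c\int_\R (f^ag^a)(x,t)\,\frac{f^a(x,t)-f^a(x-a,t)}{a}\,dx,
\end{align*}
and estimate it by $\tfrac12\|f^a_x\|_{L^2}^2+\tfrac{c^2}{2}\|f^ag^a\|_{L^2}^2$ using the elementary bound $\|(h(\cdot+a)-h(\cdot))/a\|_{L^2}\le\|h_x\|_{L^2}$. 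Absorbing the first term into the parabolic dissipation $-\int(f^a_x)^2\,dx$ and integrating in time (and doing the same for $g^a$) yields
\begin{align*}
\int_0^T\!\int_\R\big[(f^a_x)^2+(g^a_x)^2\big]\,dx\,dt\le C(T)
\end{align*}
uniformly as $a\to 0$. From \eqref{e:rewrite1} together with the bound $\|(h(\cdot+a)-h)/a\|_{H^{-1}}\le\|h\|_{L^2}$ applied to $h=f^ag^a$, I then obtain that $f^a_t,g^a_t$ are bounded in $L^2(0,T;H^{-1}(\R))$ uniformly.

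With these bounds the generalised Aubin--Lions lemma (exactly as for \eqref{e:stronglim}, localised to bounded intervals $\Omega$ where $H^1(\Omega)\subset\subset L^2(\Omega)$) gives, along a subsequence, $f^a\to f^0$ and $g^a\to g^0$ strongly in $L^2(0,T;L^2(\Omega))$ for every bounded $\Omega$, hence $f^ag^a\to f^0g^0$ in $L^1_{loc}(\R\times[0,\infty))$. To conclude I would pass to the limit in the weak form of \eqref{e:rewrite1}: the linear terms and the initial data pass by (weak) convergence, while for the nonlinear term I move the discrete difference onto the test function $\varphi\in C_0^\infty(\R\times[0,\infty))$,
\begin{align*}
\int_0^\infty\!\int_\R\frac{(f^ag^a)(x+a,t)-(f^ag^a)(x,t)}{a}\,\varphi(x,t)\,dx\,dt=\int_0^\infty\!\int_\R(f^ag^a)(x,t)\,\frac{\varphi(x-a,t)-\varphi(x,t)}{a}\,dx\,dt,
\end{align*}
and use that $(\varphi(\cdot-a,t)-\varphi(\cdot,t))/a\to-\varphi_x(\cdot,t)$ uniformly (with supports in a fixed compact set) together with $f^ag^a\to f^0g^0$ in $L^1_{loc}$ to reach $-c\int\!\int f^0g^0\,\varphi_x=c\int\!\int(f^0g^0)_x\,\varphi$ in the sense of distributions. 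This identifies the limiting system \eqref{e:limit}; uniqueness of the limit (so that the whole family, not just a subsequence, converges) follows since $f^0+g^0$ is the heat solution with datum $f_I+g_I$ and the remaining structure is as in \cite{CMP,CMW}. I expect the main obstacle to be precisely the $O(1/a)$ singularity of the collision term in \eqref{e:rewrite1}: the resolution is the discrete integration by parts above, which turns it into $-c\int f^ag^a\,(f^a(x)-f^a(x-a))/a\,dx$ so that $\|(h(\cdot+a)-h)/a\|_{L^2}\le\|h_x\|_{L^2}$ lets it be absorbed by the dissipation, and the uniform $L^\infty$ bound that makes $f^ag^a$ bounded in $L^2$ is available only because $u^a=f^a+g^a(\cdot+a)$ solves the heat equation \emph{exactly}.
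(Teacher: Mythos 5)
Your proposal is correct and follows the same overall architecture as the paper's proof: the exact heat equation for $u^a=f^a+g^a(\cdot+a,\cdot)$ gives uniform $L^1\cap L^\infty$ control, an energy estimate on \eqref{e:rewrite1} yields a uniform $L^2_tH^1_x$ bound by absorbing the difference quotient into the parabolic dissipation, and Aubin--Lions plus a weak formulation identifies the limit \eqref{e:limit}. The one place where you genuinely diverge is the treatment of the singular collision term. The paper applies a discrete Leibniz rule, writing $\frac{1}{a}\bigl[(fg)(x+a)f(x)-f^2(x)g(x)\bigr]$ as $f(x+a)f(x)\frac{g(x+a)-g(x)}{a}+f(x)g(x)\frac{f(x+a)-f(x)}{a}$, then uses $L^\infty$ bounds, a Fourier-transform estimate of the difference quotient by $\|f_x\|_{L^2}$, and Young's inequality with a parameter $\mu$ tuned against $\|f\|_{L^\infty}+\|g\|_{L^\infty}$ so that the derivative terms are absorbed. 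You instead perform a discrete integration by parts, putting the entire difference quotient onto $f^a$ alone and paying with $\|f^ag^a\|_{L^2}^2$, which is uniformly bounded by the a priori estimates; this avoids the $\mu$-tuning and is arguably the cleaner route to the same $\int_0^T\int_\R (f^a_x)^2+(g^a_x)^2\,dx\,dt\le C(T)$ bound. You are also more explicit than the paper about the uniform $L^2(0,T;H^{-1})$ bound on the time derivatives needed for Aubin--Lions and about passing the nonlinear term to the limit by moving the discrete difference onto the test function. The only soft spot is the final claim that the whole family (rather than a subsequence) converges, which you attribute to uniqueness for \eqref{e:limit} ``as in \cite{CMP,CMW}''; this is not actually proved, but the paper is equally silent on that point, so it is not a gap relative to the paper's own argument.
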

\begin{proof}
We reiterate that $u(x,t) = f(x,t)+g(x+a,t)$ satisfies the heat equation \eqref{e:u}, therefore we can deduce that
\begin{align*}
\lVert u \rVert_{L^\infty(0,\infty;\R)} \leq K \text{ uniformly as } k\rightarrow \infty, a\rightarrow 0 \text{ with } ka = c.
\end{align*}
Multiplication of system \eqref{e:rewrite1} with $f$ and $g$ respectively gives
\begin{align*}
\frac{1}{2}\frac{d}{dt} \int_\R f^2 dx &= c \int_\R \frac{f(x+a,t)g(x+a,t)f(x,t) - f^2(x,t) g(x,t)}{a} dx - \int_\R f_x^2 dx\\
\frac{1}{2}\frac{d}{dt} \int_\R g^2 dx &= c \int_\R \frac{f(x+a,t)g(x+a,t)g(x,t) - f(x,t) g^2(x,t)}{a} dx - \int_\R g_x^2 dx.
\end{align*}
Now we look at the first term on the right hand side of the equation for $f$ and deduce
\begin{align*}
\int_\R& \frac{f(x+a,t)g(x+a,t)f(x,t) - f^2(x,t) g(x,t)}{a} dx  \\
& =\int_\R [f(x+a,t) f(x,t) \frac{g(x+a,t)-g(x,t)}{a} + f(x,t) g(x,t) \frac{f(x+a,t) - f(x,t)}{a}] dx\\
& \leq \lVert f \rVert_{L^\infty} \int_\R \lvert f(x,t)\rvert \lvert \frac{g(x+a,t)-g(x,a)}{a} \rvert dx + \lVert g \rVert_{L^\infty} \int_\R \lvert f(x,t)\rvert \lvert \frac{f(x+a,t) - f(x)}{a} \rvert dx\\
& \leq \frac{1}{2} \lVert f \rVert_{L^\infty} \frac{1}{\mu} \int_\R f^2(x,t) dx + \frac{1}{2} \lVert g \rVert_{L^\infty} \frac{1}{\mu}\int_\R f^2(x,t) dx \\
& \phantom{\leq} + \frac{\mu}{2} \lVert f \rVert_{L^\infty} \int_\R \lvert \frac{g(x+a,t)-g(x,t)}{a} \rvert^2 dx + \frac{\mu}{2} \lVert g \rVert_{L^\infty} \int_\R \lvert \frac{f(x+a,t)-f(x,t)}{a} \rvert^2 dx.
\end{align*}
The difference quotient for $f$ and $g$ can be estimated using the Fourier transform, i.e.
\begin{align*}
\int_\R \lvert \frac{f(x+a,t)-f(x,t)}{a}\rvert^2 dx = \int \lvert \hat{f}(\xi)\rvert^2 \lvert \frac{e^{2 \pi i a \xi} -1 }{a} \rvert^2 d\xi.
\end{align*}
Since $\lvert \frac{e^{2 \pi i a\xi}-1}{a}\rvert$ is bounded by $\lvert \xi \rvert^2$ we obtain that
\begin{align*}
\int_\R \lvert \frac{f(x+a,t)-f(x,t)}{a} \rvert^2 dx \leq L \int_\R \lvert f_x(x,t) \rvert^2 dx.
\end{align*}
Similar computations hold for the second equation in $g$. Now we choose $\mu = \frac{1}{2(\lVert f \rVert_{L^\infty} + \lVert g \rVert_{L^\infty}) L}$, then there exists a $\sigma > 0$ such that
\begin{align*}
\frac{1}{2} \frac{d}{dt} \int_\R (f(x,t)^2 + g(x,t)^2) dx + \sigma \int_\R (f_x(x,t)^2 + g_x(x,t)^2) dx \leq M.
\end{align*}
Thus
\begin{align*}
f,g \in \Ltwoloc(0,\infty; H^1(\R)) \text{ uniformly for } a \rightarrow 0.
\end{align*}
By the Aubin Lions lemma, \cite{Showalter}, we conclude that $f \rightarrow f^0$ and $g \rightarrow g^0$ as $a \rightarrow 0$. Thus we can pass to the limit in system \eqref{e:rewrite1}, making the formal limit rigorous.
\end{proof}

\begin{rem}
  Consider the price formation FB problem \eqref{e:lasrylionsdiff} subject to the initial condition $v(x,0) = f_I(x)-g_i(x)$, where $f_I$ and $g_I$ satisfy \ref{a:A} and \ref{a:wellprep}. Being interested in the
  limit as the transaction fee $a$ tends to zero we denote $v = v^a, ~f = f^a$ and $g = g^a$. The initial data $f_I$ and $g_I$ are assumed to be independent of $a$. It is an easy exercise to show that the weak
limits $v^a \xrightarrow{a\rightarrow 0} v^0$, $f^a \xrightarrow{a\rightarrow 0} f^0$ and $g^a \xrightarrow{a\rightarrow 0} g^0$ exist and that
\begin{align*}
  v^0 = -\frac{d}{dx}\lvert F^0\rvert, f^0 = (v^0)^+ = -\frac{d}{dx} ( F^0 )^+, g^0 = (v^0)^- = \frac{d}{dx} (F^0)^-
\end{align*}
holds, where $F^0$ satisfies the IVP for the heat equation
\begin{align*}
&F^0_t = F^0_{xx}, \quad  x \in \mathbb{R}, t > 0\\
&F^0(x,0) = \int_0^\infty f_I(x+y)dy -\int_0^\infty g_I(x-y) dy.
\end{align*}
Using the methods of \cite{CMP, CMW} we conclude the existence of a unique continuous function $p^0 = p^0(t)$ (the price of the limiting system) such that
\begin{align*}
F^0(p^0(t),t) = 0.
\end{align*}
The parabolic Hopf lemma implies $\frac{\partial}{\partial x} F^0(p^0(t),t) < 0$. Therefore the function $v^0(\cdot,t)$ has a jump discontinuity at $x = p^0(t)$, with
\begin{align*}
\lim_{x \rightarrow p^0(t)^+} v^0(x,t)  = -\lim_{x \rightarrow p^0(t)^-} v^0(x,t) < 0.
\end{align*}
We conclude that $v^a(x,t)$ has an internal layer of rapid transition at $p = p^a(t)$  for $a$ small. Moreover it follows that the limits $k\rightarrow \infty$ (from the Boltzmann type model to the Lasry Lions FBP)
and $a \rightarrow 0$ (from the Lasry Lions FBP to the  limit as discussed in the remark) consecutively do not give the same result as $k \rightarrow \infty$, $a\rightarrow 0$ and $ka=c > 0$ in the
Boltzmann type model.\\
We conclude that for markets with many transactions with small fees, the Boltzmann price formation model is very sensitive to the relative sizes of 
the transaction frequencies and transaction fees. We refer to the next section for numerical illustrations.
\end{rem}

\section{Numerical simulations}\label{s:numsim}

\noindent In this section we illustrate the behaviour of the Boltzmann type price formation model \eqref{e:boltzprice} on a bounded domain $\Omega$. We supplement system \eqref{e:boltzprice} with  homogeneous Neumann boundary
conditions, hence the total mass of buyers and vendors is conserved over time. We simulate the behaviour of solutions of \eqref{e:boltzprice} for different values of $k$ and compare them to the solutions of \eqref{e:lasrylions}.
Furthermore we perform numerical simulations in the case of not well prepared initial data and study the evolution of the initial layer on the fast time scale, see also Section \ref{s:initlayer}.\\

\noindent All simulations are performed on the interval $\Omega = (0,1)$, where $1$ corresponds to the scaled maximum price of the traded good. The domain $\Omega$ is split into $N$ intervals of size $h$. 
We denote the discrete grid points by  $x_i$ and the time steps by $\Delta t$. Then the discrete solutions $f^n_j$ and $g^n_j$ correspond to the functions $f$ and $g$ at time $t^n = n \Delta t$ and $x_j = j h$.\\
\noindent The simulations of the free boundary problem \eqref{e:lasrylions} are based its formulation as an IVP for the heat equation, i.e.    
\begin{subequations}\label{e:V}
\begin{align}
&V_t(x,t) = V_{xx}(x,t) \text{ for all } x \in \Omega\\
&V_x(0,t) = V_x(a,t) \text{ and } V_x(1,t) = V_x(1-a,t),\label{e:Vbc}
\end{align}
\end{subequations}
with $v = v(x,t)$ being a solution to \eqref{e:lasrylionsdiff} and $V$ given by
\begin{align*}
V(x,t) &=
\begin{cases}
\phantom{-}\sum_{n=0}^\infty v^+(x + al,t)\\
-\sum_{n=0}^\infty v^-(x+al,t),
\end{cases}
\end{align*}
as in \cite{CMP}. Note that \eqref{e:Vbc} corresponds to the transformed homogeneous Neumann boundary conditions.  System \eqref{e:V} is solved using an implicit in time finite difference method. The solutions $f$ and $g$ to \eqref{e:lasrylions} can be calculated from
\begin{align*}
v(x,t) = V(x,t) - V^+(x+a,t) + V^-(x-a,t).
\end{align*}
The price $p = p(t)$ corresponds to the zero level set of $V = V(x,t)$.\\
System \eqref{e:boltzprice} is solved using a semi-implicit in time discretization, i.e. the diffusion is discretised implicitly in time, the ``collision'' terms explicitly. This results in the 
following discretization:
\begin{subequations}\label{e:discrete}
\begin{align}
\frac{f^n_j - f^{n-1}_j}{\Delta t} &= -k f^{n-1}_j g^{n-1}_j + k f^{n-1}_{j+a} g^{n-1}_{j+a} + \frac{1}{h^2}(f^n_{j+1}-2f^n_j + f^n_{j-1})\\
\frac{g^n_j - g^{n-1}_j}{\Delta t} &= -k f^{n-1}_j g^{n-1}_j  +k f^{n-1}_{j-a} g^{n-1}_{j-a} + \frac{1}{h^2}(g^n_{j+1}-2g^n_j + g^n_{j-1}).
\end{align}
\end{subequations}
Thus the solution of \eqref{e:discrete} corresponds to solving the linear system 
\begin{align*}
(I + \Delta t A) \binom{f^n}{g^n} = (I + \Delta t B) \binom{f^{n-1}}{g^{n-1}},   
\end{align*}
where the matrices $I$, $A,$ and $B$ are to the identity, the discrete Laplacian and the discrete ``collision'' term respectively.

\subsection*{Example 1: Comparison of the models for large trading rates $k$} 
\noindent We compare the behaviour of solutions of \eqref{e:boltzprice} with \eqref{e:lasrylions} using the same initial data $f_I = f_I(x)$ and $g_I = g_I(x)$. Therefore we choose an initial datum, which satisfies the
compatibility condition $\lambda(t) = -\frac{\partial f}{\partial x}(p(0),0) = \frac{\partial g}{\partial x}(p(0),0)$, namely
\begin{align*}
&f_I(x) = \begin{cases}
1 & \text{ if } x \leq 0.5\\
-10 x + 6 & \text{ if } 0.5 < x < 0.6\\
0 & \text{ otherwise} 
\end{cases}
&g_I(x) = \begin{cases}
10 x - 6 &\text{ if } x > 0.6\\
0 & \text{ otherwise}.\\
\end{cases}
\end{align*}
Since \eqref{e:boltzprice} converges to \eqref{e:lasrylions} as $k\rightarrow\infty$, we set $k = 10^6$. The other parameters are set to: $a = 10h$, $\sigma = 1$, $\Delta t = \frac{1}{k} = 10^{-6}$ and $h = 0.002$. 
Figure \ref{f:ex1} shows the evolution of the price and the distribution of buyers and vendors at time $t=1$ for both models. We observe that the buyer-vendor distribution as well as the evolution of the price agree very well. 
\begin{figure}
\begin{center}
  \subfigure[Buyer and vendor distribution at time $t=1$]{\includegraphics[width=0.4\textwidth]{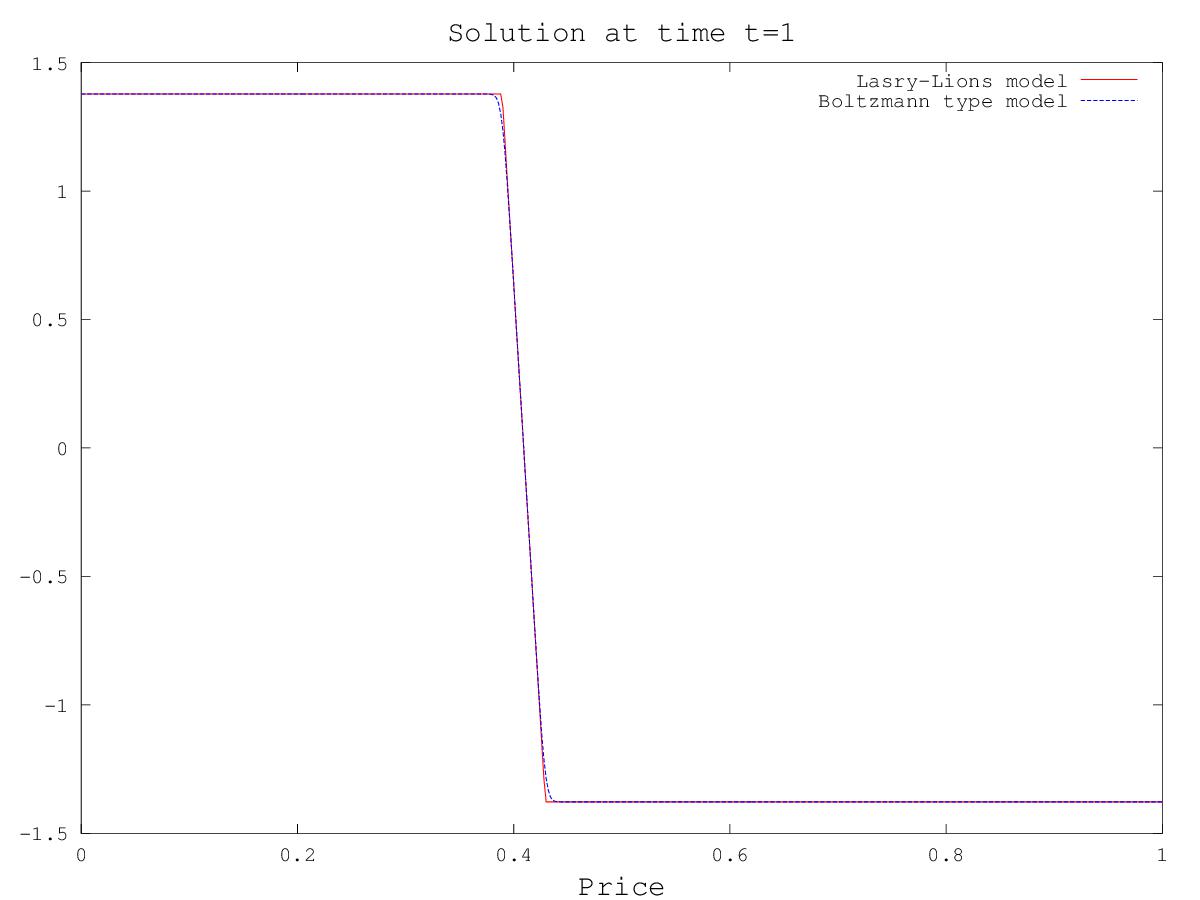}}\hspace*{0.5cm}
  \subfigure[Evolution of the price $p(t)$ in time]{\includegraphics[width=0.4\textwidth]{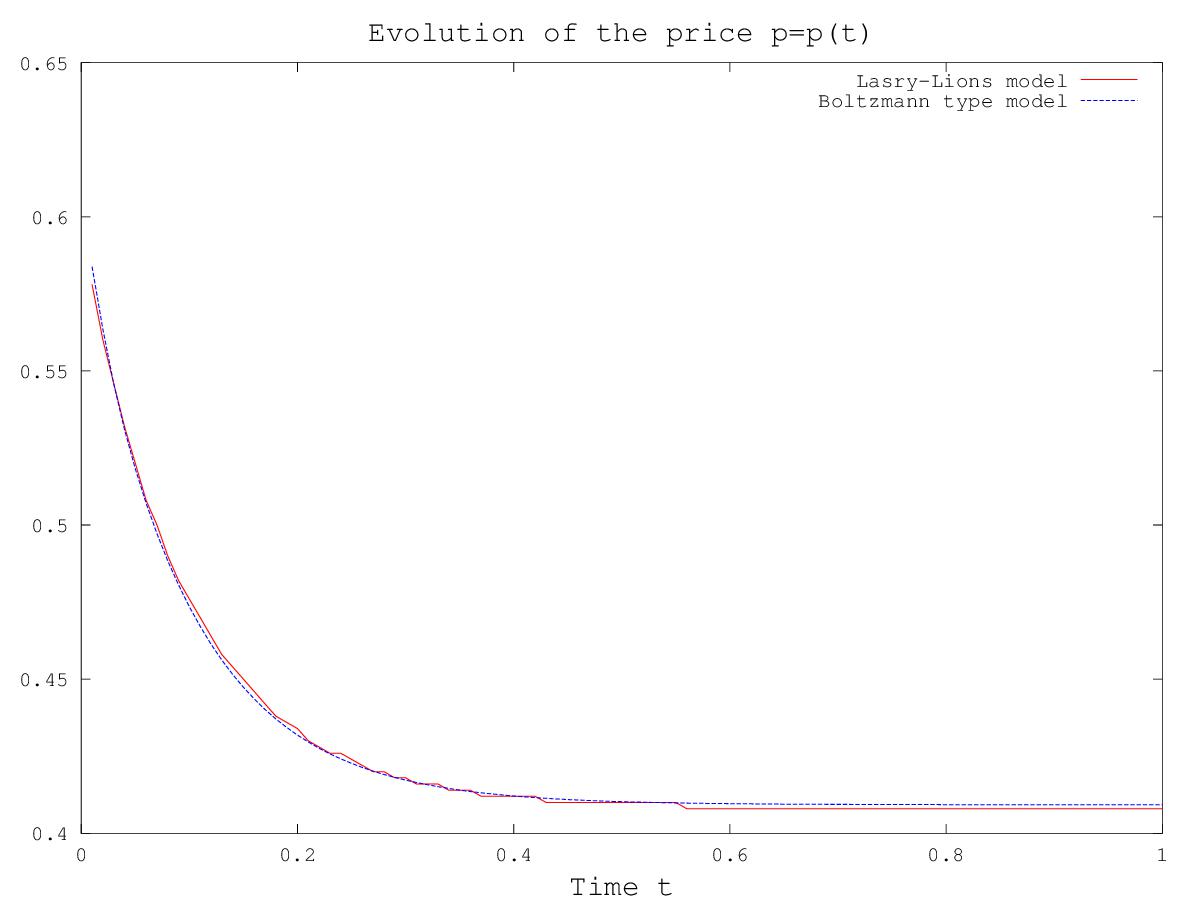}}
\end{center}
\caption{Comparison of the price formation models for conforming initial data}\label{f:ex1}
\end{figure}

\subsection*{Example 2: Behaviour of solutions of \eqref{e:boltzprice} for different trading rates}
\noindent Next we illustrate how solutions of \eqref{e:boltzprice} change for different values of $k$. The initial datum is set to
\begin{align*}
&f_I(x) = \begin{cases}
15 (x-0.3)(0.5-x) & \text{ if } 0.3 \leq x \leq 0.5\\
0 & \text{ otherwise }
\end{cases}\\
&g_I(x) = \begin{cases}
15(0.55-x)(x-0.8) &\text{ if } 0.55 \leq x \leq 0.8\\
0 & \text{ otherwise}.\\
\end{cases}
\end{align*}
The simulations were performed with the following parameters: $h=10^{-3}, \tau = \frac{1}{k}$, $k = 10^5 \text{ and } 10^6$, $a = 10h$.
The final distribution of buyers and vendors as well as the evolution of the price is depicted in Figure \ref{f:ex2}. We observe that the value $c = f(x,0.5) = f(x,0.5)$
decreases if $k$ increases, also the evolution of the price towards its stationary value is slower. The support of the trading distribution \eqref{e:trade} decreases for 
larger $k$ and converges to a Dirac $\delta$ as in \eqref{e:lasrylions}.
\begin{figure}
\begin{center}
  \subfigure[Buyer and vendor distribution at time $t=0.5$ for $k=10^5, 10^6$.]{\includegraphics[width=0.45\textwidth]{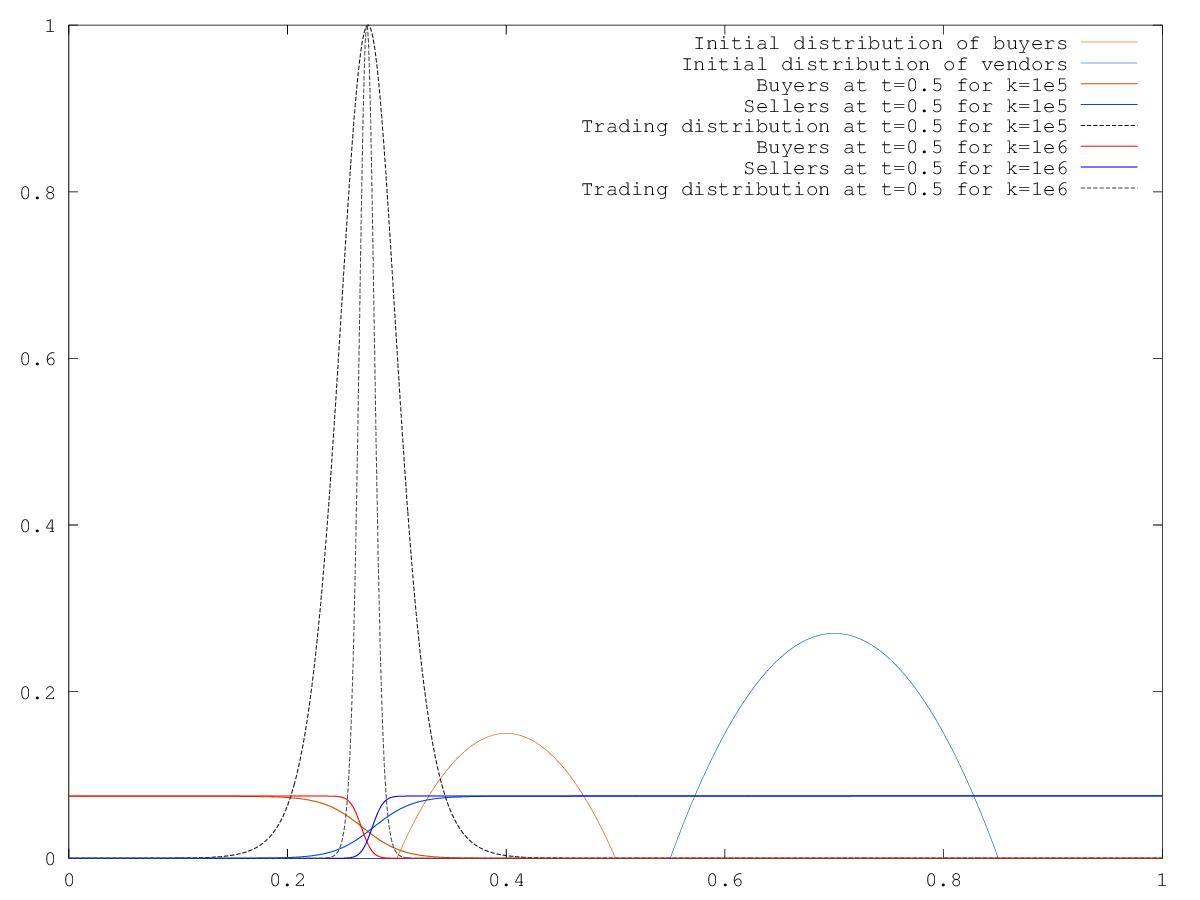}}\hspace*{0.5cm}
  \subfigure[Evolution of the price $p(t)$ in time for $k=10^5,10^6$]{\includegraphics[width=0.45\textwidth]{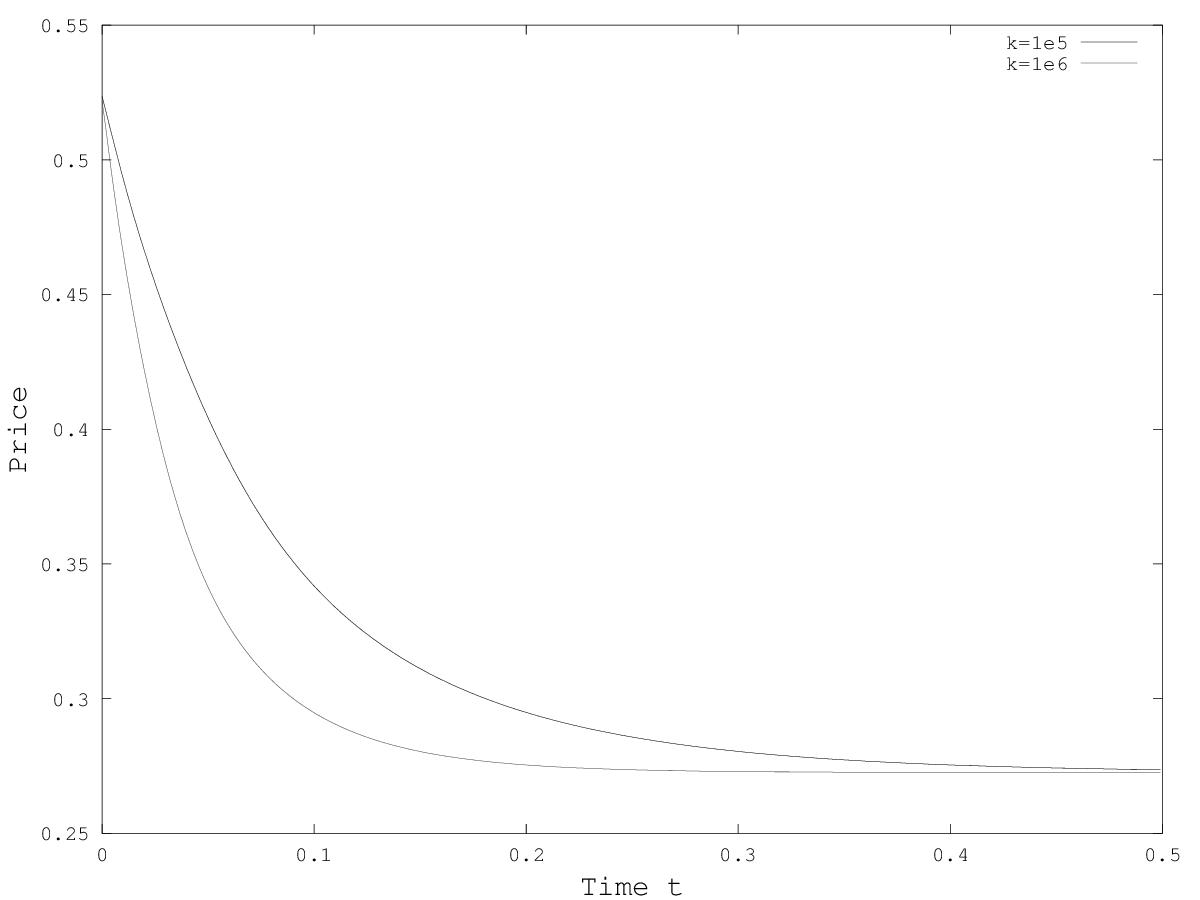}}
\end{center}
\caption{Comparison of the Boltzmann type model for different values of $k$}\label{f:ex2}
\end{figure} 

\subsection*{Example 3: Not well prepared initial datum on the fast time scale} 

\noindent In this example we study the behaviour of solutions in the case of a not well prepared (in the sense of assumption \ref{a:wellprep}) initial data $f_I$ and $g_I$  on the fast time scale. Hence we simulate system \eqref{e:abeps} 
with an initial datum, where the density of vendors and buyers are switched, i.e. the initial data are not well prepared in the sense of 
assumption \ref{a:wellprep}. We set
\begin{align*}
&f_I(x) = \begin{cases}
15(x-0.65)(0.95-x) & \text{ if } 0.65 \leq x \leq 0.95\\
0 & \text{ otherwise} 
\end{cases}\\
&g_I(x) = \begin{cases}
12(x-0.25)(0.5-x)  &\text{ if } 0.25 \leq x \leq 0.5\\
0 & \text{ otherwise}.
\end{cases}
\end{align*}
We choose a spatial resolution of $h = 2 \times 10^{-3} $, the time steps are $\Delta t = 2 \times 10^{-4}$, the transaction rate $k=5 \times 10^2$ and the transaction cost $a = 10 h$.  
The evolution of $\sup f(x) > h$ and the $\inf g(x) > h$ as well as the final distribution of buyers and vendors is depicted in Figure \ref{f:ex3}.
We observe that even though the initial data is not well prepared $\aeps$ and $\beps$ converge to a well prepared solutions in the sense of
assumption \ref{a:wellprep}.
\begin{figure}
\begin{center}
 \subfigure[Buyer and vendor distribution at time $t=0$ (dashed line) and $t=1$ (solid line)]{\includegraphics[width=0.45\textwidth]{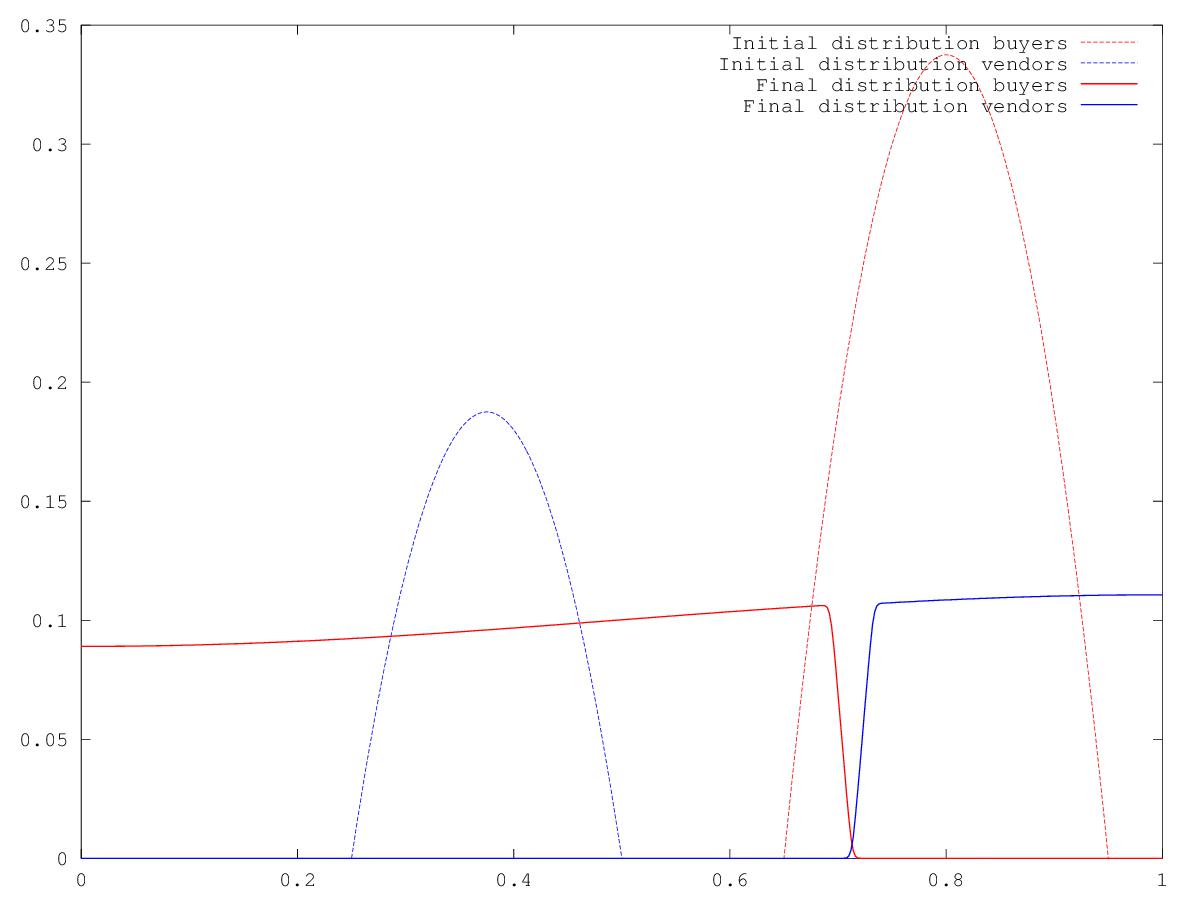}}\hspace*{0.5cm}
 \subfigure[Evolution of $\sup f(x)$ and $\inf g(x)$ in time]{\includegraphics[width=0.45\textwidth]{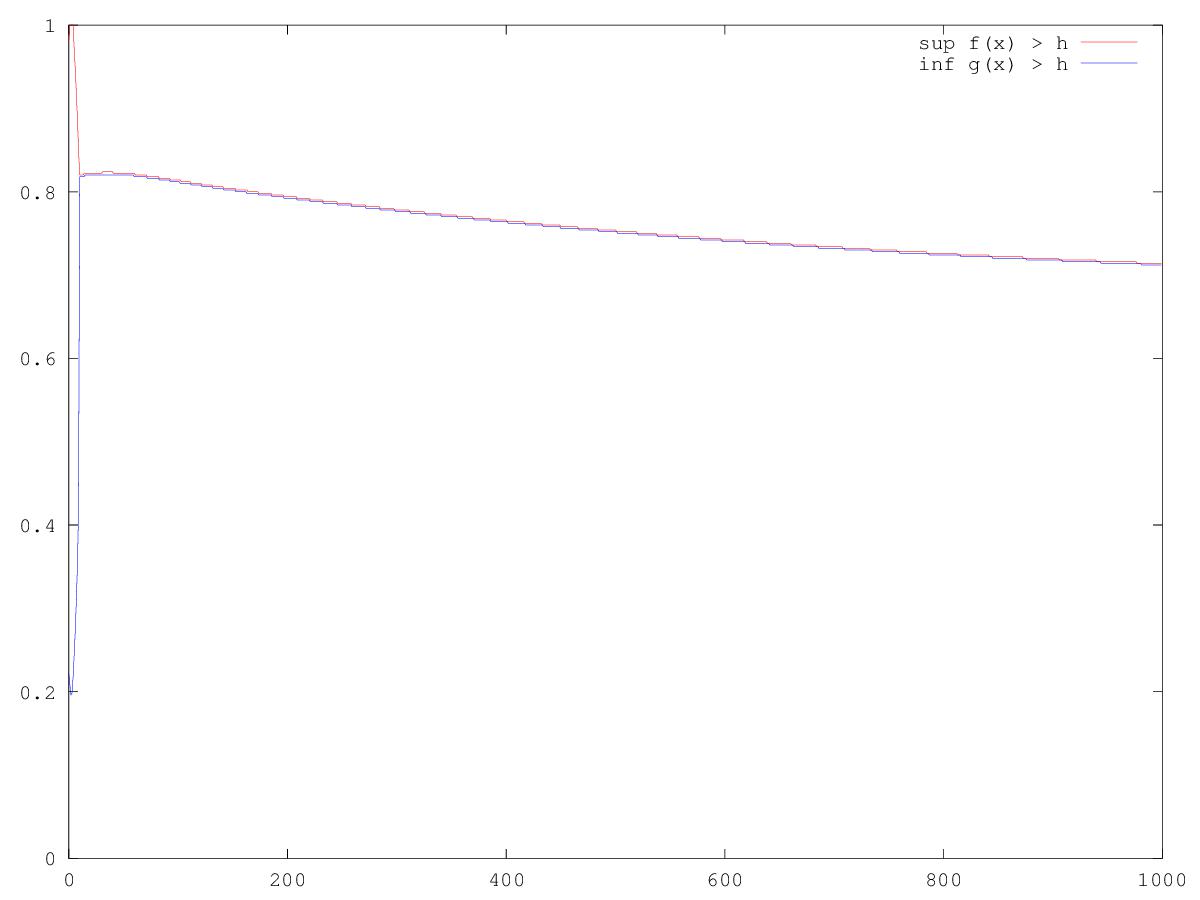}}
\end{center}
\caption{Non-conforming initial data on the fast time scale}\label{f:ex3}
\end{figure}  

\subsection*{Example 4: Scaling limit}
Finally we consider the scaling limit discussed in Section \ref{s:kac}. We choose a large domain $\Omega = [0,20]$ and an initial datum of the form
\begin{align*}
&f_I(x) = \begin{cases}
1 & \text{ if } 9 \leq x \leq 9.5\\
-2 x + 20 & \text{ if } 9.5 < x < 10\\
0 & \text{ otherwise} 
\end{cases}
&g_I(x) = \begin{cases}
2 x - 20 &\text{ if } 10 \leq x > 11\\
0 & \text{ otherwise},\\
\end{cases}
\end{align*}
to avoid any interference with boundary conditions. We observe the evolution of the price for the following set of parameters for \eqref{e:boltzprice}
\begin{align*}
a = h = 2 \times 10^{-5}, k = 5 \times 10^4, \sigma = 1 \text{ and } \tau = 2 \times 10^{-5}.
\end{align*}
This parameter set corresponds to the case where $ak = \mathcal{O}(1)$, i.e. the scaling limit in Section \ref{s:kac}. Figure \ref{f:ex4} shows the solutions
of \eqref{e:boltzprice} and \eqref{e:lasrylions} at time $t=1$ and the evolution of the price $p = p(t)$. We observe that the consecutive limits $k \rightarrow \infty$ and 
$a \rightarrow 0$ in Lasry\&Lions do not give the same result as the scaling limit discussed in Section \ref{s:kac}. Also the layer of fast transition in the Lasry-Lions solutions $f$ and $g$ 
is clearly visible in Figure 4a, as mentioned in Remark 2 of Section \ref{s:kac}.
\begin{figure}
\begin{center}
 \subfigure[Buyer and vendor distribution of \eqref{e:boltzprice} and \eqref{e:lasrylions}  at time $t=1$]{\includegraphics[width=0.45\textwidth]{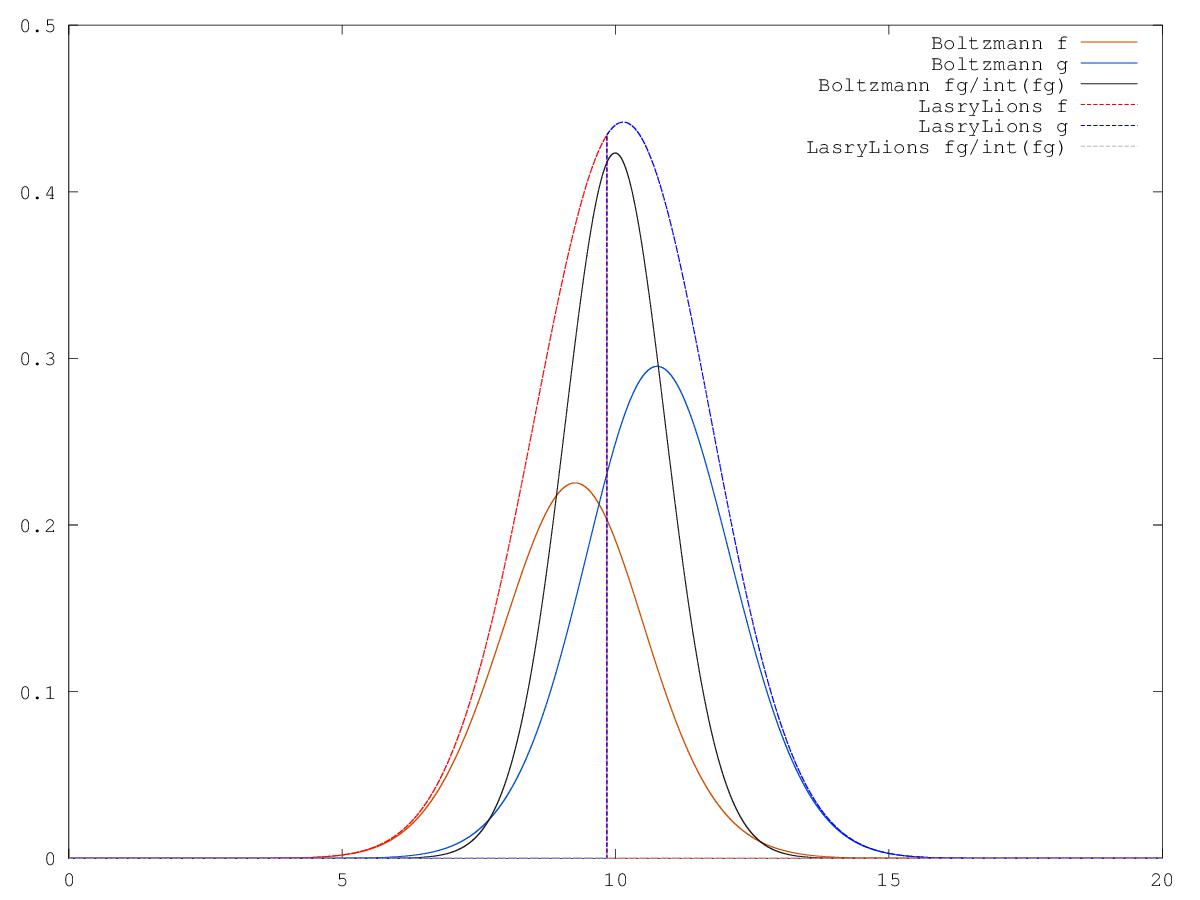}}\hspace*{0.5cm}
 \subfigure[Evolution of $p=p(t)$ in time]{\includegraphics[width=0.45\textwidth]{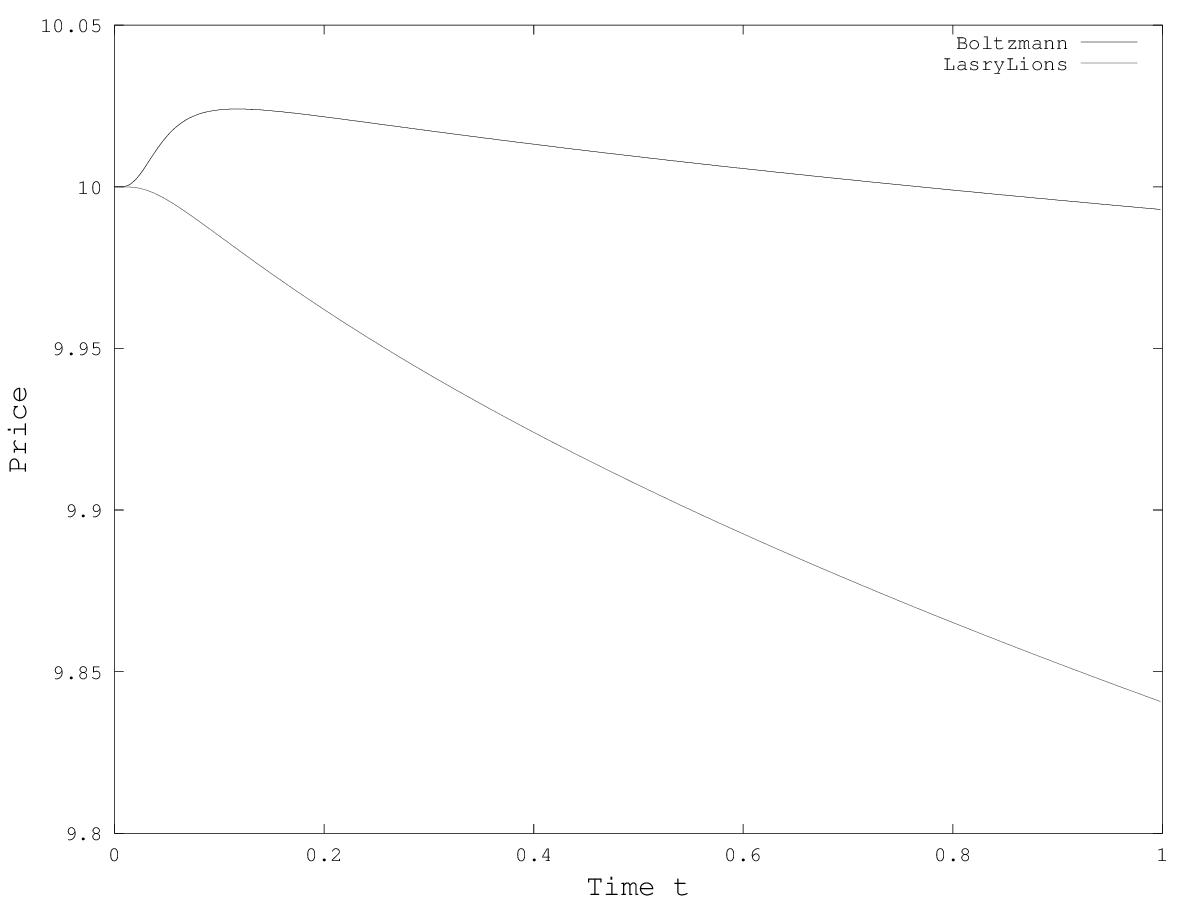}}
\end{center}
\caption{Non-conforming initial data on the fast time scale}\label{f:ex4}
\end{figure}

\section*{Acknowledgements}
LC was partially supported by a grant of  the DMS division of the NSF. PAM expresses his gratitude to the Humboldt Foundation for awarding the Humboldt Research Award to him, which allowed him to spend
time with Martin Burger's research group in M\"unster, where this research was initiated. PAM also acknowledges support from the Paris Foundation of Mathematics. MTW acknowledges support from the Austrian 
Science Foundation FWF via the Hertha-Firnberg project T456-N23. We thank Bertram D\"uring (University of Sussex) for the useful hints to literature.

\bibliographystyle{royalagain}
\bibliography{priceboltzmann}

\end{document}